\def\url@leostyle{%
  \@ifundefined{selectfont}{\def\UrlFont{\sf}}{\def\UrlFont{\small\sffamily}}}
\numberwithin{equation}{section}
\numberwithin{figure}{section}
\theoremstyle{plain}
\newtheorem{lettertheorem}{\protect\theoremname}
  \theoremstyle{plain}
  \newtheorem{letterlemma}[lettertheorem]{Lemma}
  \theoremstyle{plain}
\newtheorem{thm}{\protect\theoremname}
  \theoremstyle{plain}
  \theoremstyle{plain}
  \theoremstyle{plain}
    \newtheorem{defi}{Definition}
  \theoremstyle{plain}
  \theoremstyle{plain}
  \theoremstyle{remark}
  \newtheorem{rem}{\protect\remarkname}
\DeclareMathOperator{\diam}{diam}
\DeclareMathOperator{\loc}{loc}
\DeclareMathOperator{\RH}{RH}
\def\Xint#1{\mathchoice
{\XXint\displaystyle\textstyle{#1}}%
{\XXint\textstyle\scriptstyle{#1}}%
{\XXint\scriptstyle\scriptscriptstyle{#1}}%
{\XXint\scriptscriptstyle%
\scriptscriptstyle{#1}}%
\!\int}
\def\XXint#1#2#3{{\setbox0=\hbox{$#1{#2#3}{%
\int}$ }
\vcenter{\hbox{$#2#3$ }}\kern-.6\wd0}}
\def\dashint{\thinspace \Xint-}
  \providecommand{\lemmaname}{Lemma}
   \providecommand{\corollaryname}{Corollary}
  \providecommand{\remarkname}{Remark}
\providecommand{\theoremname}{Theorem}
\newcommand\rurl[2]{%
  \href{http://#1}{\nolinkurl{#1}}%
}
\providecommand{\keywords}[1]{\textbf{\textit{Index terms---}} #1}
\begin{document}

\title[A note on weighted improved Poincar\'e-type inequalities]{A note on generalized Poincar\'e-type inequalities with applications to weighted improved Poincar\'e-type inequalities}

\keywords{Improved Poincar\'e-Sobolev inequalities, weights, fractional inequalities}

\subjclass[2010]{Primary: 35A23, and Secondary: 42B20} 


\author[J.C. Mart\'{i}nez-Perales]{Javier C. Mart\'{i}nez-Perales}
\address[J.C. Mart\'{i}nez-Perales]{
BCAM\textendash  Basque Center for Applied Mathematics, Bilbao, Spain}
\email{jmartinez@bcamath.org}


\begin{abstract}
The main result of this paper supports a conjecture by C. P\'erez and E. Rela about the properties of the weight appearing in their main result in \cite{Perez2018}. The result we obtain does not need any condition on the weight, but still is not fully satisfactory, even though the result in \cite{Perez2018} is obtained as a corollary of ours. Also, we extend the conclusions of their theorem to the range $p<1$.

As an application of our result, we give a unified vision of weighted improved Poincar\'e-type inequalities in the Euclidean setting, which gathers both weighted improved classical and fractional Poincar\'e inequalities within an approach which avoids any representation formula. We obtain results in the direction of those in \cite{Drelichman2008} and \cite{Cejas2019} and furthermore we improve them in some aspects.

Finally, we also explore analog inequalities in the context of metric spaces by means of the already known self-improving results.
\end{abstract}

\maketitle

\section{Introduction}

Recently, in \cite{Perez2018}, the authors consider a locally integrable function satisfying in every cube $Q$ the starting inequality 
\begin{equation}\label{eq:starting_point}
\dashint_Q |f(x)-f_Q|dx\leq a(Q),
\end{equation}
where the dashed integral represents the average with respect the underlying measure, $f_Q:=\dashint_Qf(x)dx$, and $a:\mathcal{Q}\to[0,\infty)$ is some functional defined on the family of all cubes in $\mathbb{R}^n$. Then, for a given value $p\geq 1$, they are able to get the self-improved inequality 
\begin{equation}\label{mejora1}
\left(\dashint_Q|f-f_Q|^pdw\right)^{1/p}\leq C_ns\|a\|^sa(Q),
\end{equation}
as long as the functional $a$ satisfies the so-called $SD_p^s(w)$ condition, namely, for any disjoint subfamily $\{Q_j\}_j$ of cubes contained in $Q$,
\[
 \sum_ja(Q_j)^pw(Q_j)\leq C^p\left(\frac{\left|\bigcup_jQ_j\right|}{|Q|}\right)^{p/s}a(Q)^pw(Q),
 \]
where $C>0$ is a constant independent of the cubes we consider and $w$ is in the class $A_\infty$ of all Muckenhoupt   weights. The authors remark that, although the $A_\infty$ condition is assumed, the $A_\infty$ constant, which is defined by
\begin{equation}\label{Ainfty_constant}
[w]_{A_\infty}:=\sup_{Q\in\mathcal{Q}}\frac{1}{w(Q)} \int_Q M(w\chi_Q)dx,
\end{equation}
does not appear in the result. Then,  they conjecture that the $A_\infty$ condition is not needed  and  that the result should hold for any weight.  This conjecture is supported by the fact that, for the functional $a(Q):=\ell(Q)^\alpha$, $\alpha>0$   (which for any weight $w$ satisfies the smallness condition $SD_p^s(w)$ for some $s>1$, and for any  $p\geq 1$) which satisfies that the self-improved inequality \eqref{mejora1}  holds for any weight $w$. Indeed, it is known that, when $0<\alpha\leq 1$, the class of functions satisfying inequality \eqref{eq:starting_point} for $a(Q)=C\ell(Q)^\alpha$ is precisely the H\"older-Lipschitz class $\Lambda_\alpha$ (see \cite{Campanato1963}). Thus, no matter the weight $w$ we take nor the value $p\geq1$ we consider, the following argument can be performed
\[
\begin{split}
\left(\dashint_Q|f-f_Q|^pdw\right)^{1/p}& \leq \left(\dashint_Q\dashint_Q|f(x)-f(y)|^pdydw(x)\right)^{1/p}\\
&\lesssim \left(\dashint_Q\dashint_Q|x-y|^{p\alpha}dydw(x)\right)^{1/p}\\
&\lesssim \ell(Q)^\alpha\left(\dashint_Q\dashint_Q dydw(x)\right)^{1/p}\\
&= \ell(Q)^\alpha.
\end{split}
\]
 This suggests that the $A_\infty$ condition is somehow an artifice of the proof, and it seems that one should be able to get rid of it.

The purpose of this paper is twofold. On one hand, we will get a generalization of the main result in \cite{Perez2018} which is valid for any weight $w$ regardless of the properties it satisfies. We actually get a result in which, instead of obtaining an improvement with a weighted $L^p(Q,dw)$ average, we get the following improvement
\begin{equation}\label{this_result}
\left(\frac{1}{|Q|\left(\dashint_Qw^r\right)^{1/r}}\int_Q |f-f_Q|^pdw\right)^{1/p}\lesssim a(Q),
\end{equation}
where $r\geq 1$ is some value for which $a$ satisfies that, given a disjoint family $\{Q_j\}_j$ of subcubes of a cube $Q$,
\begin{equation}\label{weighted_smallness}
 \sum_ja(Q_j)^p|Q_j|\left(\dashint_{Q_j}w^r\right)^{1/r}\leq \|a\|^p\left(\frac{\left|\bigcup_jQ_j\right|}{|Q|}\right)^{p/s}a(Q)^p|Q|\left(\dashint_Qw^r\right)^{1/r}.
 \end{equation}
Recall that, as was proved in \cite{Hytoenen2013,Hytoenen2012a}, for a weight $w\in A_\infty$ one has the existence of some $r>1$ for which $w\in \RH_r$. More precisely, for $r_w:=1+\frac{1}{2^{n+1}[w]_{A_\infty}-1}$, one has
\[
\left(\dashint_Q w^{r_w}\right)^{1/r_w}\leq 2\dashint_Q w,\quad\text{for every cube }Q.
\]

Jensen's inequality proves then that also $w\in \RH_r$ for every $r<r_w$ with the same constant, and so, condition \eqref{weighted_smallness} is equivalent to the $SD_p^s(w)$ condition for every $1\leq r\leq r_w$. The independence of \eqref{this_result} on $r$ implies that our result contains that in \cite{Perez2018}.
 
Even though we do not get a weighted average in our estimate, the result   will allow to prove several important results, which leads us to the second purpose of the paper: the obtaining of a unified approach to prove weighted improved Poincar\'e-type inequalities both in the classical and the fractional setting in John domains of the Euclidean space by avoiding the use of representation formulas.

More precisely, we are interested in the study on John domains of inequalities of the form
\begin{equation}\label{Def.Poincare}
\inf_{a\in \mathbb{R}}\|u-a\|_{L^{q}(\Omega,w )}\lesssim [u]_{W_{\tau}^{s,p}\left(\Omega,v  \right)},
\end{equation}	
where  $1\leq p,q< \infty$, $s,\tau\in(0,1]$, $w,v$ are weights and the notation
\begin{equation}\label{fractional_seminorm}
[u]_{W_{\tau}^{s,p}\left(\Omega,v \right)}:=\left(\int_\Omega\int_{\Omega\cap B(y,\tau d(y))}\frac{|u(x)-u(y)|^p}{|x-y|^{n+sp}} v(y)dxdy\right)^{1/p}
\end{equation}
is used whenever $s<1$ (the parameter $v$ will be omitted whenever its value is $1$).  Observe that by understanding $[u]_{W_{\tau}^{1,p}\left(\Omega,v \right)}$ as the classical seminorm of the Sobolev space $W^{1,p}(\Omega)$ for any $\tau\in(0,1)$, we obtain a unified approach for both classical and fractional weighted Poincar\'e-type inequalities. 

Recall that, roughly speaking, $\Omega$ is a John domain if it has a central point such that any other point can be connected to it without getting too close to the boundary (see Section \ref{sec:preliminaries} for a precise definition). These domains are essentially the largest class of domains in $\mathbb{R}^n$ for which the Sobolev-Poincar\'e inequality 
\begin{equation}\label{sobolev poincare}
\|u-u_{\Omega}\|_{L^\frac{np}{n-p}(\Omega)} \leq C \left(\int_{\Omega} |\nabla u(x)|^p \,dx \right)^{\frac{1}{p}},
\end{equation} 
 holds (see \cite{Mazya2011,Reshetnyak1980,Bojarski1988, Hajlasz2001} for the sufficiency, and \cite{Buckley1995} for the necessity), where $C$ does not depend on $u$ nor on the size of the domain $\Omega$. Here,  $u$ is a locally Lipschitz function, $1\leq p < n$ and $u_\Omega$ is the average of $u$ over $\Omega$.

The above inequality, also called $(\frac{np}{n-p}, p)$-Poincar\'e inequality, is a special case of a larger family of so-called improved Poincar\'e inequalities, which are $(q,p)$-Poincar\'e inequalities with a weight that is a power of the distance to the boundary $d(x)$, namely,
$$
\|u-u_{\Omega}\|_{L^q(\Omega)} \leq C \|d^{\alpha}|\nabla u|\|_{L^p(\Omega)}
$$
where $1\le p\le q \le \frac{np}{n-p(1-\alpha)}$,  $p(1-\alpha)<n$ and $\alpha \in [0,1]$ (see \cite{Boas1988,Hurri-Syrjaenen1994}, and also \cite{Drelichman2008,Acosta2017} for weighted versions). 

A classical technique for getting this kind of inequalities is through the use of a representation formula in terms of a  fractional integral, as can be seen for instance in \cite{Drelichman2008,Hurri-Syrjaenen2013}. Another classical argument goes through the use of chains of cubes in    order to reduce the problem of finding an inequality in $\Omega$ to its counterpart on these cubes. An approach which avoids  the use of any representation formula  to obtain Poincar\'e-Sobolev inequalities on cubes (or balls) was introduced in \cite{Franchi1998} (and then sharpened in \cite{MacManus1998}). See also the recent work \cite{Perez2018} for more precise results on this direction. The local-to-global method began with the work \cite{Boman1982}  and later with \cite{Iwaniec1985,Jerison1986}, and has been used by many authors, for example \cite{Hurri-Syrjaenen1994,Chua1993} and \cite{Hurri1990}, where both the integral representation formula and the local-to-global methods are used.

It is also worth noting that these inequalities have also been studied in metric spaces with doubling measures, replacing $|\nabla u|$ by a generalized gradient (see  \cite{Hajlasz2000} and references therein).

In  recent years,  several authors have turned their attention to the fractional counterpart of inequality \eqref{sobolev poincare}, beginning with the work \cite{Hurri-Syrjaenen2013} where the inequality 
\begin{equation}\label{fraccionaria euclidea}
\|u-u_{\Omega}\|_{L^q(\Omega)} \leq C \left(\int_{\Omega}\int_{\Omega \cap B(x,\tau d(x))} \frac{|u(x)-u(z)|^p}{|x-z|^{n+sp}}\,dx\,dz\right)^{1/p} 
\end{equation}
was obtained for a bounded John domain $\Omega$, $s, \tau \in (0,1)$, $p<\frac{n}{s}$ and $1< p \leq q \leq \frac{np}{n-sp}$. The case $p=1$ was proved in \cite{Dyda2016} using  the so-called Maz'ya's truncation method (see \cite{Mazya2011}) adapted to the fractional setting, which allows to obtain a strong inequality from a weak one.  Alternatively, \eqref{fraccionaria euclidea} can be deduced by applying the main result in \cite{Perez2018}  and then using chains of cubes as mentioned above.

Observe that the seminorm appearing on the right hand side of inequality (\ref{fraccionaria euclidea}) is stronger than that of the usual fractional Sobolev space $W^{s,p}(\Omega)$. More precisely, if we consider $W^{s,p}(\Omega)$ to be the subspace of $L^p(\Omega)$ induced by the seminorm
$$
[f]_{W^{s,p}(\Omega)}:= \left(\int_{\Omega}\int_{\Omega} \frac{|f(x)-f(z)|^p}{|x-z|^{n+sp}}\,dx\,dz\right)^{1/p},
$$
then it is known that this space coincides with that defined by the unweighted seminorm $[f]_{W_{\tau}^{s,p}\left(\Omega  \right)}$ given in \eqref{fractional_seminorm}  when $\Omega$ is Lipschitz (see \cite{Dyda2006}), but there are examples of John domains $\Omega \subset \mathbb{R}^n$ for which the inclusion $W^{s,p}(\Omega)\subset  W^{s,p}_\tau(\Omega)$ is strict (see \cite{Drelichman2019} for this result and characterizations of both spaces as interpolation spaces). This has led to call inequality \eqref{fraccionaria euclidea} an ``improved'' fractional inequality. However, throughout this work, we will use this terminology to refer to inequalities including powers of the distance to the boundary as weights, as in the classical case. 
 
Improvements of an inequality like \eqref{fraccionaria euclidea}   were obtained in \cite{Drelichman2017} by including powers of the distance to the boundary  as weights on both sides of the  estimate, and also in \cite{LopezGarcia2017}, where the weights are defined by powers of the distance to a compact set of the boundary of the domain. Recently, in \cite{Cejas2019}, the authors have obtained improved fractional Poincar\'e-Sobolev inequalities on John domains of abstract metric spaces endowed with a measure which satisfies  some properties with respect to the metric.

 We will say that $\Omega$ supports the $(w,v)$-weighted fractional $(q,p)$-Poincar\'e inequality in $\Omega$ if \eqref{Def.Poincare} holds on $\Omega$ for every function $u\in W^{s,p}(\Omega,w)$ (when $s=1$ we omit the word  ``fractional'' in the notation). When $w$ and $v$ are defined  by including functions of the distance to the boundary in their expression, we shall refer to these inequalities as $(w,v)$-weighted improved  inequalities or just as $(w,v)$-improved  inequalities, when they are just defined by functions of the distance to the boundary (more functions than power functions are suitable in this approach).

Our results are in the spirit of a combination of the main results in \cite{Drelichman2008} and \cite{Cejas2019}. More precisely, we will improve the result in \cite{Cejas2019} by giving a weighted version of it and thus obtaining a fractional counterpart of the main result in \cite{Drelichman2008}. However, when restricted to the non-fractional setting, our results do not improve the main result in \cite{Drelichman2008}, but there is some overlap between both results, in the sense that, although we can state a more general version of it in terms of the functions of the distance to the boundary which are considered, we cannot obtain all the weights they get. We will stress the differences between our result and theirs in Section \ref{sec:applications}. Also, we have not been able to improve completely the results in \cite{Cejas2019}, as we cannot give weights defined by the distance to a compact set of the boundary instead of weights defined by the distance to the boundary, as it is done in \cite{LopezGarcia2017}.

The outline of the rest of the paper is as follows: we devote Section \ref{sec:preliminaries} to the statement of the main tools and previous results. In Section \ref{sec:improvement}  we prove the main result of this work, which extends Theorem 1.5 in \cite{Perez2018} and supports the non-$A_\infty$ conjecture. In Section \ref{sec:applications}, as an application of the main result of the paper, we give a unified approach for the study of fractional and non-fractional Poincar\'e inequalities in the Euclidean space. Section \ref{sec:metric} is dedicated to the study of some analog results in the more general setting of the spaces of homogeneous type.

\section{Preliminaries and previous results}\label{sec:preliminaries}

From now on $C$ and $c$ will denote constants that can change their values even within a single string of estimates. When necessary, we will stress the dependence of a constant on a particular parameter by writing it as a subindex. Also, we will use the notation $A\lesssim B$ whenever there exists a  constant $c>0$ independent of all relevant parameters for which $A\leq cB$.  Whenever $A\lesssim B$ and $B\lesssim A$, we will write $A\asymp B$. 

 The distance between a point $x$ and  the boundary of $\Omega$ will be denoted $d(x):=\inf_{y\in\partial\Omega}|x-y|$.  For given $r>0$ and $x\in \mathbb{R}^n$, the cube centered at $x$ with sidelength $r$ is the set $Q(x,r):=\{y\in \mathbb{R}^n:\max_{i=1,\ldots,n}\{|x_i-y_i|\}<r\}$. Given a cube $Q\subset \mathbb{R}^n$,  $\ell(Q)$ will denote its sidelength and $x_Q$ its center. For any $\lambda>0$,  $\lambda Q$ will be the sube with same center as $Q$ and sidelength $\lambda \ell(Q)$.

In the following, we will introduce some geometric notions on domains of $\mathbb{R}^n$.  First, we introduce the notion of Whitney decomposition of an open proper subset $\Omega\subset \mathbb{R}^n$, which we take from \cite[Proposition 3.3]{Diening2010} (see the references therein).
\begin{letterlemma}\label{Whitney}
There exist constants $1<c_1<c_2$ and $N>0$ such that every open subset $\Omega\subsetneq\mathbb{R}^n$ there exists a family $\{Q_j\}_{j=0}^\infty$ of cubes such that 
\begin{enumerate}
\item[(W1)] $\Omega=\bigcup_{j=0}^\infty c_1 Q_j= \bigcup_{j=0}^\infty 2c_1 Q_j$;  
\item[(W2)] $\frac{c_1}{2}\diam(Q_j)\leq d(Q_j,\partial\Omega)\leq c_2 \diam(Q_j)$;
\item[(W3)] $\sum_{j=0}^\infty \chi_{2c_1Q_j}\leq N\chi_\Omega$ on $\mathbb{R}^n$.
\end{enumerate}
Such a family is called a Whitney covering of $\Omega$ with constants $c_1, c_2$ and $N$. 
\end{letterlemma}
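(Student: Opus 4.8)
The plan is to run the classical dyadic Whitney construction, keeping enough slack in the comparability constants that the prescribed dilations $c_1Q_j$ and $2c_1Q_j$ still lie inside $\Omega$. We may assume $\Omega\neq\emptyset$; since $\Omega\subsetneq\mathbb{R}^n$ is open, $d(x,\partial\Omega)\in(0,\infty)$ for every $x\in\Omega$. For $k\in\mathbb{Z}$ let $\mathcal{D}_k$ be the grid of half-open dyadic cubes of side $2^{-k}$ and $\mathcal{D}:=\bigcup_{k}\mathcal{D}_k$, so that any two cubes of $\mathcal{D}$ are nested or have disjoint interiors. Fix constants $0<\lambda<\Lambda$ depending only on $n$, and call $Q\in\mathcal{D}$ \emph{admissible} if $\lambda\diam(Q)\le d(Q,\partial\Omega)\le\Lambda\diam(Q)$. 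First I would check that the admissible cubes cover $\Omega$: if $x\in\Omega$ and $Q\in\mathcal{D}_k$ contains $x$, then $d(x,\partial\Omega)-\sqrt n\,2^{-k}\le d(Q,\partial\Omega)\le d(x,\partial\Omega)$, so choosing the generation $k$ with $2^{-k}$ in the interval $[\,d(x,\partial\Omega)/(\Lambda\sqrt n),\,d(x,\partial\Omega)/((\lambda+1)\sqrt n)\,]$ makes $Q$ admissible, and this interval contains a power of $2$ as soon as $\Lambda\ge 2(\lambda+1)$. Since an admissible cube cannot be arbitrarily large (for fixed $x\in Q$ one has $d(Q,\partial\Omega)\le d(x,\partial\Omega)<\infty$ while $\diam(Q)\to\infty$), every admissible cube lies in a maximal admissible one; by dyadic nesting the maximal admissible cubes $\{Q_j\}_{j\ge0}$ have pairwise disjoint interiors and still cover $\Omega$. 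Each $Q_j$ is admissible, which is exactly (W2) with $c_2:=\Lambda$ and any $c_1\le 2\lambda$.

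For (W1) it remains to arrange $2c_1Q_j\subset\Omega$. A point $y\in 2c_1Q_j$ is within Euclidean distance $(c_1-\tfrac12)\diam(Q_j)$ of $\overline{Q_j}$, so
\[
d(y,\partial\Omega)\ \ge\ d(Q_j,\partial\Omega)-\bigl(c_1-\tfrac12\bigr)\diam(Q_j)\ \ge\ \bigl(\lambda-c_1+\tfrac12\bigr)\diam(Q_j)\ >\ 0
\]
provided $c_1<\lambda+\tfrac12$; together with the requirement $c_1>1$ this forces $\lambda>\tfrac12$, and one checks that, e.g., $\lambda=1$, $\Lambda=5$, $c_1=\tfrac54$ satisfy all constraints ($\Lambda\ge2(\lambda+1)$, $c_1\le2\lambda$, $c_1<\lambda+\tfrac12$, $1<c_1<c_2$) simultaneously. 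Then $Q_j\subset c_1Q_j\subset 2c_1Q_j\subset\Omega$ while the $Q_j$ cover $\Omega$, whence $\Omega=\bigcup_j c_1Q_j=\bigcup_j 2c_1Q_j$.

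The step I expect to require the most care is the bounded-overlap bound (W3). The key observation is that if $x\in 2c_1Q_j$ then $x\in\Omega$ and, combining admissibility with the fact that $d(\cdot,\partial\Omega)$ is $1$-Lipschitz, one gets $\diam(Q_j)\asymp d(x,\partial\Omega)$ with constants depending only on $n$; since the $Q_j$ are dyadic, those with $x\in 2c_1Q_j$ then have side lengths taking only a bounded number of values, and all of them lie in a fixed ball $B(x,R\,d(x,\partial\Omega))$ with $R=R(n)$. As these $Q_j$ have pairwise disjoint interiors and volume $\asymp d(x,\partial\Omega)^n$, a volume-packing count bounds their number by some $N=N(n)$; and since each $2c_1Q_j\subset\Omega$, this says precisely $\sum_j\chi_{2c_1Q_j}\le N\chi_\Omega$. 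All of $c_1$, $c_2$ and $N$ depend only on $n$, as required. The only genuine subtlety is the bookkeeping of the second paragraph, which has to keep $c_1$ strictly above $1$ while still forcing $2c_1Q_j$ inside $\Omega$; once that balance is struck, (W1)--(W3) are routine dyadic and volume estimates.
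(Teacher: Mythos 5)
The paper does not prove this lemma: it is quoted verbatim from \cite[Proposition~3.3]{Diening2010}, which in turn goes back to Stein's classical Whitney decomposition; so there is no paper proof to compare against. Your sketch is precisely the standard dyadic Whitney construction: take maximal dyadic cubes whose diameter is comparable to the distance to the boundary, tune the comparability window $[\lambda,\Lambda]$ and the dilation $c_1$ so that $2c_1Q_j\subset\Omega$ while $c_1>1$, and get bounded overlap from disjointness of the $Q_j$ together with a volume count. The parameter bookkeeping is right ($\Lambda\ge 2(\lambda+1)$ to guarantee an admissible generation, $c_1\le 2\lambda$ to get (W2), $c_1<\lambda+\tfrac12$ to push $2c_1Q_j$ into $\Omega$, and the sample choice $\lambda=1,\ \Lambda=5,\ c_1=\tfrac54$ is consistent). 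Two small points worth spelling out in a full write-up: from $d(y,\partial\Omega)>0$ for all $y\in 2c_1Q_j$ you should add that $2c_1Q_j$ is connected and meets $\Omega$, so it cannot meet $\mathbb{R}^n\setminus\overline{\Omega}$ either, giving $2c_1Q_j\subset\Omega$; and in (W3) the packing count uses the disjoint \emph{undilated} cubes $Q_j$ of volume $\asymp d(x,\partial\Omega)^n$ sitting inside $B\bigl(x,R\,d(x,\partial\Omega)\bigr)$. Neither affects validity; the argument is sound and matches the cited source's approach.
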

As it is proved in \cite{Buckley1996}, bounded John domains (which are the object of our study) and Boman chain domains are the same kind of domains. Hence, we can just focus on Boman chain domains, which we define below.

\begin{defi}\label{chain condition}
Let $\Omega$  be a domain. We say that $\Omega$ is a Boman chain domain if there exist $\sigma,N\geq 1$ such that a covering $\mathcal{W}$ of $\Omega$  with cubes can be found with the following properties:
\begin{enumerate}
\item[(B1)] $\sum_{Q\in \mathcal{W}}\chi_{\sigma Q}(x)\leq N\chi_{\Omega}(x)$, $x\in \mathbb{R}^n$;
\item[(B2)] There is a ``central cube'' $Q_0\in \mathcal{W}$ that can be connected with every cube $Q\in \mathcal{W}$ by a finite chain of cubes $Q_0,Q_1,\ldots, Q_k(Q)=Q$ from $\mathcal{W}$ such that $Q\subset NQ_j$ for $j=0,1,\ldots, k(Q)$. Moreover, $Q_j\cap Q_{j+1}$ contains a cube $R_j$ such that $Q_j\cup Q_{j+1}\subset NR_j$.
\end{enumerate}
This family $\mathcal{W}$ will be called a chain decomposition of $\Omega$ centered on $Q_0$ and with constants $\sigma$ and $N$.
\end{defi}

A fundamental fact we are going to use is that, for a John domain $\Omega$, one can build a Boman chain by using dilations of cubes in a family of Whitney cubes in such a way that these dilations still satisfy property (W2) in Lemma \ref{Whitney}. Together with this fact, we will use the following fundamental result for Boman chain domains, which allows us to obtain global inequalities for the domain from local inequalities for cubes in the chain decomposition. This result can be found in \cite{Chua1993}.
\begin{lettertheorem}\label{Chua} Let $\sigma,N\geq 1$, $1\leq q<\infty$  and $\Omega$ a Boman chain domain with chain decomposition $\mathcal{W}$ centered on a cube $Q_0$ and with constants $\sigma$ and $N$. Let $\nu$ be a measure and $w$ be a doubling weight and suppose that for each cube $Q$ in $\mathcal{W}$, we have that
\[
\|f-f_Q\|_{L^q(Q,w)} \leq A \|g\|_{L^p(\sigma Q,\nu)},
\]
with $A$ independent of $Q$. Then there exists a positive constant $C$ such that 
\[
\|f-f_{Q_0}\|_{L^q(\Omega,w)}\leq CA\|g\|_{L^p(\Omega,\nu)},
\]
where $C$ depends only on $n,q,w,\sigma$ and $N$.
\end{lettertheorem}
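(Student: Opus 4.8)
The plan is to run the classical local-to-global argument for Boman chain domains: use the bounded-overlap axiom (B1) to dominate the global norm by a sum of local norms, bound the genuinely local contribution straight from the hypothesis, and absorb the discrepancy between the local averages $f_Q$ and the single average $f_{Q_0}$ by telescoping along the chains produced by (B2). The doubling of $w$ will enter only to compare $w$-measures of comparable cubes; no stronger property of $w$ is needed. So I would fix the chain decomposition $\mathcal W$ centered on $Q_0$ with constants $\sigma,N$ and, since the cubes of $\mathcal W$ cover $\Omega$, start from
\begin{align*}
\|f-f_{Q_0}\|_{L^q(\Omega,w)}^q&\le\sum_{Q\in\mathcal W}\int_Q|f-f_{Q_0}|^q w\,dx\\
&\le 2^{q-1}\sum_{Q\in\mathcal W}\Big(\int_Q|f-f_Q|^q w\,dx+|f_Q-f_{Q_0}|^q w(Q)\Big).
\end{align*}
For the first sum the hypothesis gives $\int_Q|f-f_Q|^q w\,dx\le A^q\|g\|_{L^p(\sigma Q,\nu)}^q$, and then, using that $p\le q$ (so that $t\mapsto t^{q/p}$ is superadditive on $[0,\infty)$) together with the bounded overlap of $\{\sigma Q\}_{Q\in\mathcal W}$ from (B1),
\[
\sum_{Q\in\mathcal W}\|g\|_{L^p(\sigma Q,\nu)}^q\le\Big(\sum_{Q\in\mathcal W}\|g\|_{L^p(\sigma Q,\nu)}^p\Big)^{q/p}\le N^{q/p}\|g\|_{L^p(\Omega,\nu)}^q,
\]
so the first sum is already $\lesssim A^q\|g\|_{L^p(\Omega,\nu)}^q$; the same computation will also handle $\sum_{P\in\mathcal W}\gamma_P^q w(P)$ below, where $\gamma_P:=A\,\|g\|_{L^p(\sigma P,\nu)}\,w(P)^{-1/q}$.

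For the chaining sum $\sum_{Q\in\mathcal W}|f_Q-f_{Q_0}|^q w(Q)$, I would fix $Q$, take its chain $Q_0,Q_1,\dots,Q_{k(Q)}=Q$ together with the cubes $R_j\subseteq Q_j\cap Q_{j+1}$, $Q_j\cup Q_{j+1}\subseteq NR_j$, from (B2), and telescope $|f_Q-f_{Q_0}|\le\sum_{j=0}^{k(Q)-1}|f_{Q_j}-f_{Q_{j+1}}|$. The key point, which keeps the whole argument inside weighted $L^q$ and so avoids ever having to compare weighted and unweighted averages, is to estimate each increment by integrating it against $w$ restricted to $R_j$: since $R_j\subseteq Q_j\cap Q_{j+1}$, the hypothesis on $Q_j$ and on $Q_{j+1}$ gives
\begin{align*}
|f_{Q_j}-f_{Q_{j+1}}|^q\,w(R_j)&\le 2^{q-1}\Big(\int_{Q_j}|f-f_{Q_j}|^q w\,dx+\int_{Q_{j+1}}|f-f_{Q_{j+1}}|^q w\,dx\Big)\\
&\le 2^{q-1}A^q\big(\|g\|_{L^p(\sigma Q_j,\nu)}^q+\|g\|_{L^p(\sigma Q_{j+1},\nu)}^q\big).
\end{align*}
Because $w$ is doubling and $Q_j\cup Q_{j+1}\subseteq NR_j$, one has $w(R_j)\asymp w(Q_j)\asymp w(Q_{j+1})$ with constants depending only on $N$ and the doubling constant of $w$, whence $|f_{Q_j}-f_{Q_{j+1}}|\lesssim\gamma_{Q_j}+\gamma_{Q_{j+1}}$. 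Telescoping (each cube of the chain is used at most twice) yields $|f_Q-f_{Q_0}|\lesssim\sum_{P\in\mathrm{ch}(Q)}\gamma_P$, with $\mathrm{ch}(Q):=\{Q_0,\dots,Q_{k(Q)}\}$, so that
\[
\sum_{Q\in\mathcal W}|f_Q-f_{Q_0}|^q w(Q)\lesssim\sum_{Q\in\mathcal W}\Big(\sum_{P\in\mathrm{ch}(Q)}\gamma_P\Big)^q w(Q).
\]

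It then remains to absorb the right-hand side into $\sum_{P\in\mathcal W}\gamma_P^q w(P)$, which by the first step is $\lesssim A^q\|g\|_{L^p(\Omega,\nu)}^q$. Regarding $\mathcal W$ as a tree rooted at $Q_0$ in which $\mathrm{ch}(Q)$ is the branch from $Q$ up to the root, this last inequality is a discrete weighted Hardy inequality on the tree; this is the technical heart of the theorem and is exactly the part carried out in \cite{Chua1993} (in the spirit of \cite{Boman1982,Iwaniec1985}). What drives it is the Boman ``shadow'' (Carleson-type) packing estimate: for each $P\in\mathcal W$, every $Q$ with $P\in\mathrm{ch}(Q)$ satisfies $Q\subseteq NP$ by (B2), so by (B1) and the doubling of $w$,
\[
\sum_{Q:\,P\in\mathrm{ch}(Q)}w(Q)\le N\,w(NP)\lesssim w(P).
\]
I expect this last step to be the main obstacle: a direct Minkowski or Hölder bound for the chain sum loses a factor equal to the a priori unbounded combinatorial depth or branching of the tree, so one must genuinely use the packing estimate above --- equivalently, bound on $\ell^{q'}(\mathcal W;w)$ (with $1/q+1/q'=1$) the adjoint averaging operator $\psi\mapsto\big(w(P)^{-1}\sum_{Q:\,P\in\mathrm{ch}(Q)}\psi(Q)\,w(Q)\big)_{P\in\mathcal W}$ --- and extracting the geometric decay it encodes along individual chains is precisely where both the doubling of $w$ and the bounded-overlap axiom are used. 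Combining the three estimates gives $\|f-f_{Q_0}\|_{L^q(\Omega,w)}\lesssim A\,\|g\|_{L^p(\Omega,\nu)}$ with a constant depending only on $n$, $q$, $\sigma$, $N$ and the doubling constant of $w$.
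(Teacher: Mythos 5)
The paper does not prove Theorem~\ref{Chua}: it is quoted as a result from \cite{Chua1993}, so there is no in-house proof to compare against. Assessing your sketch on its own terms: the overall plan is the standard local-to-global chaining scheme of Boman, Iwaniec--Nolder and Chua, and the ingredients you assemble are the right ones --- bounded overlap (B1) together with $\ell^p\hookrightarrow\ell^q$ (which indeed needs $p\le q$, an assumption the statement leaves implicit), the splitting into a local oscillation plus a chain discrepancy, telescoping along chains using the intermediate cubes $R_j$, the comparison $w(R_j)\asymp w(Q_j)\asymp w(Q_{j+1})$ from doubling and $Q_j\cup Q_{j+1}\subset NR_j$, and the shadow packing estimate $\sum_{Q:\,P\in\mathrm{ch}(Q)}w(Q)\lesssim w(P)$.

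There is, however, a genuine gap exactly where you flag one. The claim $\sum_{Q}\bigl(\sum_{P\in\mathrm{ch}(Q)}\gamma_P\bigr)^q w(Q)\lesssim\sum_{P}\gamma_P^q w(P)$ is the actual content of the theorem, and stating the packing (Carleson) condition is not the same as proving the embedding: the naive H\"older/Minkowski bound along each chain, which your purely discrete set-up invites, degrades by a power of the (a priori unbounded) chain length --- the very obstruction you anticipate --- and the packing estimate does not remove this loss without an additional argument. To close it you would need either to carry out a Carleson embedding/stopping-time argument on the chain tree, or, as \cite{Chua1993} actually does, avoid staying discrete: from $Q\subset NP$ for $P\in\mathrm{ch}(Q)$ one has the pointwise bound $|f_Q-f_{Q_0}|\chi_Q(x)\lesssim\sum_P\gamma_P\chi_{NP}(x)$, and then $\bigl\|\sum_P\gamma_P\chi_{NP}\bigr\|_{L^q(w)}$ is handled by duality, Jensen on the $w$-averages over $NP$, bounded overlap, and the $L^{q'}(w)$-boundedness of the $w$-weighted maximal function (this is the second, and essential, use of the doubling hypothesis). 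As written, your proposal correctly identifies the structure and the crux of the proof but does not prove that crux; it defers it to the reference.
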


Now we introduce the kind of weights by means of which we are going to define our concept of ``improved'' Poincar\'e inequality. In this work we are going to consider weights which are of the form $w_{\phi }(x)=\phi(d(x))$, where $\phi$ is a positive increasing function satisfying a certain growth condition. In the fractional case, at the right hand side of the inequality, we will obtain a weight of the form $v_{\Phi,\gamma }(x,y)=\min_{z\in\{x,y\}}d(z)^{\gamma} \Phi(d(z))$, where $\Phi$ will be an appropriate power of $\phi$. For the classical case, we will obtain  a weight of the form $w_{\Phi,\gamma }(x)=d(x)^{\gamma} \Phi(d(x))$, where $\Phi$ will be an appropriate power of $\phi$.  This weights will be referred to as improving weights.

It turns out that more general objects can be written in the inequalities in Theorem \ref{Chua}. Moreover, we will take this into account together with the fact  that chains in a Boman chain domain can be taken such that they satisfy condition (W2) in Lemma \ref{Whitney} to obtain the following trivial modification of Theorem \ref{Chua}, which allows to consider weighted improved inequalities with the improving weights we just introduced above.

\begin{thm}\label{Chua2} Let $\sigma,N\geq 1$, $1\leq q<\infty$ and $\Omega$ a Boman chain domain with chain decomposition $\mathcal{W}$ centered on a cube (ball) $Q_0$ and with constants $\sigma$ and $N$. Consider two increasing functions $\phi$ and $\Phi$ with $\phi(2t)\leq c\phi(t)$. Let $\nu$ be a measure and $w$ be a doubling weight and suppose that for each cube (ball) $Q$ in $\mathcal{W}$, we have that, for some function $g$,
\[
\|f-f_Q\|_{L^q(Q,w w_\phi)} \leq A \|g\|_{L^p(\sigma Q,w_\Phi\nu )},
\]
with $A$ independent of $Q$. Then there exists a positive constant $C$ such that 
\[
\|f-f_{Q_0}\|_{L^q(\Omega,w w_\phi)}\leq CA\|g\|_{L^p(\Omega,w_\Phi\nu )},
\]
where $C$ depends only on $\mu,q,w$, $\phi$ and $\Omega$ (through the Boman and Whitney constants).
\end{thm}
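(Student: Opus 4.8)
The plan is to rerun the local‑to‑global argument behind Theorem~\ref{Chua} with the two improving weights carried along, checking that the single place where the doubling hypothesis on the left‑hand weight is used survives when $w$ is replaced by $W:=w\,w_\phi$. The decisive structural input is the remark made just before the statement: since $\Omega$ is a bounded John domain, its chain decomposition $\mathcal W$ can be built from fixed dilates of Whitney cubes in such a way that \emph{every} cube occurring in the argument --- each $Q\in\mathcal W$, each of the finitely many fixed dilates $\kappa Q$ ($\kappa$ one of $2c_1,\sigma,N,\dots$) that Chua's proof manipulates, and each connecting cube $R_j$ together with $NR_j$ --- still obeys an estimate of type (W2), namely $d(z)\asymp\diam(Q')$ for every $z$ in the cube $Q'$ in question, with constants depending only on $c_1,c_2$ and the fixed dilation factors. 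First I would record the consequence: because $\phi$ is increasing and $\phi(2t)\le c\phi(t)$, one gets $w_\phi(z)=\phi(d(z))\asymp\phi(\diam Q')$ for $z\in Q'$, and whenever two such cubes $Q'\subset Q''$ have comparable diameters --- as do $Q_j,\,Q_{j+1},\,R_j$, all contained in $NR_j$ --- the corresponding values $\phi(\diam\,\cdot\,)$ are comparable as well, again by the doubling of $\phi$.

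With this in hand the argument of \cite{Chua1993} goes through essentially verbatim. One runs Boman's telescoping: for a.e.\ $x\in\Omega$ pick $Q(x)\in\mathcal W$ with $x\in Q(x)$ and a chain $Q_0,\dots,Q_{k}=Q(x)$ as in (B2), write
\[
|f(x)-f_{Q_0}|\le |f(x)-f_{Q(x)}|+\sum_{j=0}^{k-1}\bigl(|f_{Q_j}-f_{R_j}|+|f_{R_j}-f_{Q_{j+1}}|\bigr),
\]
bound each $|f_{Q_j}-f_{R_j}|\lesssim\dashint_{Q_j}|f-f_{Q_j}|$ using $|Q_j|\asymp|R_j|$, take $L^q$ norms against $W$, sum with the bounded‑overlap property (B1) and the chain structure, and finally invoke the local hypotheses. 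The only appearances of the doubling of the left‑hand weight are the comparisons of $W(\kappa Q)$ with $W(Q)$ and of $W(NR_j)$ with $W(R_j)$; for any such pair $Q'\subset Q''$ with $\diam Q''\asymp\diam Q'$ the first paragraph yields
\[
W(Q'')=\int_{Q''}w\,w_\phi\;\asymp\;\phi(\diam Q'')\,w(Q'')\;\asymp\;\phi(\diam Q')\,w(Q'')\;\lesssim\;\phi(\diam Q')\,w(Q')\;\asymp\;W(Q'),
\]
the middle inequality being precisely the doubling of $w$. Hence $W$ satisfies exactly the comparisons Chua's proof needs, with constants depending only on $n$, $w$, $\phi$ and the Boman and Whitney constants of $\Omega$.

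On the right‑hand side nothing new is required: the measure $w_\Phi\nu$ and the function $g$ enter only through the local inequalities $\|f-f_Q\|_{L^q(Q,W)}\le A\|g\|_{L^p(\sigma Q,w_\Phi\nu)}$, which are summed using only (B1) (and, where the exponents demand it, the embedding $\ell^p\hookrightarrow\ell^q$), exactly as in the unweighted case. Carrying this out gives
\[
\|f-f_{Q_0}\|_{L^q(\Omega,w\,w_\phi)}\le CA\,\|g\|_{L^p(\Omega,w_\Phi\nu)}
\]
with the claimed dependence of $C$, and the version with balls in place of cubes is identical. I expect the only mildly delicate point to be the bookkeeping in the first paragraph --- confirming that every cube produced by the telescoping is a controlled dilate of a Whitney cube, hence inherits (W2), so that $w_\phi$ is genuinely comparable to a constant on it. Once that is granted the extra weights are inert and the result is, as advertised, a routine modification of Theorem~\ref{Chua}.
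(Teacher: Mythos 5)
Your argument is correct and matches the paper's (very terse) justification: the paper likewise reduces Theorem~\ref{Chua2} to Theorem~\ref{Chua} by observing that, on each Whitney cube of the Boman chain and on its controlled dilates, $w_\phi$ is comparable to the constant $\phi(\ell(Q))$ by (W2) and the doubling of $\phi$, so the improving weight can be factored out of the local inequality and reinstated at the end. Your write-up just makes explicit the resulting doubling-type comparisons for $W=w\,w_\phi$ rather than factoring $\phi(\ell(Q))$ out and back in, which is the same observation bookkept differently.
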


We remark that the class of weights that we obtain are products of the improved weights of the form described above and weights satisfying a fractional Muckenhoupt-type condition on cubes, namely of the form
 \begin{equation}\label{condicion}
[w,v]_{A_{q,p}^{\alpha,r}(\Omega)}:=\sup_Q\ell(Q)^{\alpha}|Q|^{\frac{1}{q}-\frac{1}{p}}\left(\dashint_Qw^r\right)^{1/qr}\left(\dashint_Qv^{1-p'}\right)^{1/p'} <\infty,
\end{equation}
for some $r\geq1$ and $\alpha\in[0,1]$  where the supremum is taken over all cubes contained in a domain $\Omega\subseteq \mathbb{R}^n$.  This condition already appeared in the literature, see for instance \cite{Fefferman1983,Sawyer1992,Perez1994,Drelichman2008}

Let us denote this as $(w,v)\in A_{q,p}^{\alpha,r}(\Omega)$. This condition generalizes the classical $A_p$ condition, $p>1$, which is defined, for a weight $w\in L^1_{\loc}(\mathbb{R}^n)$ as
\[
[w]_{A_p}:=\sup_{Q}\left(\dashint_Q w\right)\left(\dashint_Q w^{1-p'}\right)^{p-1}<\infty,
\]
where the supremum is taken over all cubes $Q$ in $\mathbb{R}^n$. Observe that this coincides with $A_{p,p}^{0,1}(\mathbb{R}^n)$. As we mention in the introduction, we denote by  $A_\infty$ the  class of the weights which belong  to $A_p$ class, for some $p\geq1$.

The tool we are going to use to obtain our results is the self-improving theory developed in \cite{Franchi1998} and then sharpened in \cite{MacManus1998}. Although we will use a recently sharpened version of these results, which can be found in \cite{Perez2018}, we will introduce here the classical theory. In this way we can introduce some basic notation and get an idea of how the theory works. The concepts and results we are going to introduce below can be found in \cite{MacManus1998}.

Let us consider a space of homogeneous type $(X,d,\mu)$ and a nonnegative functional $a:\mathcal{B}\to[0,\infty)$ defined on the family $\mathcal{B}$ of all balls in $X$. Recall that $d$ denotes a quasimetric on $X$ and $\mu$ is a doubling measure with respect to $d$. The starting point of the theory is an inequality of the form 
\begin{equation}\label{starting_point}
\dashint_B|f-f_B|d\mu\leq a(B), \qquad B\in\mathcal{B},
\end{equation}
where $f$ is a locally integrable function. 

A standard instance of this situation (and the one we are going to work with) is the case where the space under study is the Euclidean space, $\mathcal{B}$ is the family of all cubes in $\mathbb{R}^n$ (and we will denote it by $\mathcal{Q}$), $f$ is a function in some suitable class of regular funcitons and $a$ is a functional of the form
\[
a(Q):=\ell(Q)^\alpha \left(\frac{\nu(Q)}{w(Q)}\right)^{1/p},\qquad Q\in\mathcal{Q},
\]
where  $\alpha>0$, $w$ is some weight (usually an $A_\infty$ weight) and  $\nu$ is some measure. Further, $\nu$  can be replaced by  the $L^p$ norm of a two-variable function  $A:\Sigma\times \mathbb{R}^n\to [0,\infty)$, where $\Sigma\subset \mathcal{Q}$. In this paper we consider, for a fixed domain $\Omega$, the function $A:\{Q\in\mathcal{Q}:Q\subset\Omega\}\times \mathbb{R}^n\to[0,\infty)$ given by  
\begin{equation}\label{def:fractional_derivative}
A(Q,y):=\int_{Q\cap B(y,\tau\ell(Q))}\frac{|f(x)-f(y)|^p}{|x-y|^{n+sp}}dx\chi_Q(y),\qquad 0<s<1
\end{equation}
which, in the case of cubes with sidelength proportional to the distance to the boundary, can be bounded (up to a reparametrization on $\tau$)    by the function  $B:\mathbb{R}^n\to[0,\infty)$ given by
\begin{equation}
B(y):=\int_{\Omega\cap B(y,\tau d(y))}\frac{|f(x)-f(y)|^p}{|x-y|^{n+sp}}dx\chi_\Omega(y),\qquad 0<s<1
\end{equation}
We will write $A(Q,y):= |\nabla_{s,p,Q}^\tau f|(y)$ and $B(y):= |\nabla_{s,p,\Omega }^\tau f|(y)$ and we will call them fractional (resp. local fractional) derivatives of $f$ at the point $y\in \Omega$.

The classical self-improving theory allows to obtain, under some geometric  conditions on $a$ with respect to an $A_\infty$ weight $w$, an improvement of \eqref{starting_point} of the form 
\[
\|f-f_B\|_{L^{r,\infty}\left(B,\frac{w}{w(B)}\right)}\leq C a(B),\quad B\in \mathcal{B},
\]
where
\[
\|f-f_B\|_{L^{r,\infty}\left(B,\frac{w}{w(B)}\right)}:=\sup_{t>0}t\left(\frac{w(\{x\in B:|f(x)-f_B|^r>t\}}{w(B)}\right)^{1/r}.
\]

The precise geometric condition on the functional $a$ (and the weight $w$) is the following one.
\begin{defi}\label{Dr}
Let $0<r<\infty$ and $w$ be a weight. We say that the functional $a$ satisfies the weighted $D_r(w)$ condition if there exists a finite constant $C>0$ such that for each ball $B$ and any family $\{B_i\}$ of pairwise disjoint sub-balls of  $B$,
\[
\sum_{i}a(B_i)^rw(B_i)\leq C^r a(B)^r w(B).
\]
The best constant $C$ for which this happens will be denoted by $\|a\|$. 
\end{defi}
Observe that by definition, $\|a\|\geq1$. 

This condition was used in \cite{MacManus1998} to prove the following result.  $K$ will denote the constant such that the pseudometric $d$ satisfies $d(x,z)\leq K(d(x,y)+d(y,z))$ for points $x,y,z\in X$.

\begin{lettertheorem}\label{weakimproving}
Let $B_0$ be a ball in $X$ and let $\delta>0$. Set $\widehat{B_0}=(1+\delta)KB_0$. Suppose that the functional $a$ satisfies the weighted $D_r(w)$ condition of Definition \ref{Dr} for some $0<r<\infty$ and some $w\in A_\infty(\mu)$. If $f$ is a locally integrable function on $\widehat{B_0}$ for which there exists a constant $\tau\geq 1$  such that for all balls $B$ with $B\subset \widehat{B_0}$
\[
\dashint_B |f-f_B|d\mu \leq a(\tau B),
\]
then there exists a constant $C$ independent of $f$ and $B_0$ such that
\[
\|f-f_{B_0}\|_{L^{r,\infty}\left(B_0,\frac{w}{w(B_0)}\right)}\leq C\|a\|a(\tau \widehat{B_0}).
\]
\end{lettertheorem}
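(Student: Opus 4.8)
The plan is to reduce everything to a dyadic model and then run a Calder\'on--Zygmund stopping-time (principal cubes) argument in which the $D_r(w)$ condition is iterated generation by generation. First I would fix a system of Christ-type dyadic cubes on $(X,d,\mu)$, work inside $\widehat{B_0}=(1+\delta)KB_0$, and invoke the standard comparison between balls and dyadic cubes: every dyadic cube $Q$ is sandwiched between two balls of comparable radius and comparable $\mu$-measure. This lets me define a functional $\tilde a(Q):=a(\tau B_Q)$ on dyadic cubes $Q\subset\widehat{B_0}$, where $B_Q\supset Q$ is a fixed such ball; the dilation $\tau$ and the factor $(1+\delta)K$ are present precisely so that all the balls $\tau B_Q$ fit inside $\tau\widehat{B_0}$, and the $D_r(w)$ condition for $a$ transfers, up to a harmless constant, to a $D_r(w)$ condition for $\tilde a$ on dyadic subcubes with $\|\tilde a\|\lesssim\|a\|$. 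Doubling of $w$ (a consequence of $w\in A_\infty(\mu)$) guarantees that $w$ behaves well under this ball/cube comparison and under passing from a Christ cube to its dyadic parent. After this reduction, and after subtracting the constant $f_{B_0}$, it suffices to prove $\|f-f_{Q_0}\|_{L^{r,\infty}(Q_0,\,w/w(Q_0))}\lesssim\|a\|\,\tilde a(Q_0)$ for the dyadic cube $Q_0$ associated to $B_0$, given the local estimate $\dashint_Q|f-f_Q|\,d\mu\lesssim\tilde a(Q)$ for every dyadic $Q\subset Q_0$.

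Next I would record the telescoping bound: for $\mu$-a.e.\ $x\in Q_0$, writing $Q_0=Q^0(x)\supset Q^1(x)\supset\cdots$ for the chain of dyadic ancestors of $x$,
\[
|f(x)-f_{Q_0}|\le\sum_{k\ge0}|f_{Q^{k+1}(x)}-f_{Q^k(x)}|\lesssim\sum_{k\ge0}\tilde a(Q^k(x)),
\]
using that consecutive Christ cubes have comparable $\mu$-measure. Hence the level set $\{x\in Q_0:|f(x)-f_{Q_0}|>t\}$ is, up to a $\mu$-null set, contained in $\{x\in Q_0:\sum_{Q\ni x}\tilde a(Q)>ct\}$, and it is enough to prove the weak-type $(r,r)$ bound, with respect to $w$, for the dyadic potential $x\mapsto\sum_{Q\ni x,\ Q\subseteq Q_0}\tilde a(Q)$; moreover this bound is trivial when $t\lesssim\tilde a(Q_0)$, so one may assume $t$ large.

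Now comes the heart of the matter. I would run a stopping-time decomposition of $Q_0$ relative to the partial sums $S_Q:=\sum_{Q\subseteq Q'\subseteq Q_0}\tilde a(Q')$: generation $0$ is $\{Q_0\}$, and the children of a stopping cube $P$ are the maximal dyadic $Q\subsetneq P$ for which the partial sum of $\tilde a$ from $Q$ up to $P$ first exceeds $\Lambda\,\tilde a(P)$, with $\Lambda$ a large parameter to be tuned against $\|a\|$. By maximality these partial sums stay $\le\Lambda\tilde a(P)$ strictly above the next generation, so on each terminal region the potential is dominated by the sum of $\tilde a(P)$ over the stopping ancestors $P$ of $x$, and one then distributes $\{$potential$>t\}$ over generations. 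Within a single generation the stopping children of $P$ are pairwise disjoint subcubes of $P$, so the $D_r(w)$ condition yields $\sum_{Q\text{ child of }P}\tilde a(Q)^r w(Q)\le\|a\|^r\,\tilde a(P)^r w(P)$ (up to the transferred constant); iterating this over generations produces a geometric series in the generation index, and choosing $\Lambda$ large enough makes it converge with the $\|a\|^r$ excess absorbed, leading to $w(E_t)\lesssim(\|a\|\,\tilde a(Q_0)/t)^r w(Q_0)$, which is the claim. (Equivalently, one may run the stopping time directly on $f$ at geometrically spaced heights tied to the local values of $\tilde a$, bypassing the potential.)

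I expect the main obstacle to be exactly this last bookkeeping step. Unlike in the classical Muckenhoupt-maximal-function setting, the functional $a$ is not bounded below on small cubes and the ``stopping gain'' $S_Q/\tilde a(P)$ need not be comparable from one generation to the next, so one cannot simply divide by $\tilde a$ or count generations crudely; the iterated $D_r(w)$ inequality produces a factor $\|a\|^r$ per generation that must be defeated by honest geometric decay in the generation index, which is what forces the careful choice of $\Lambda$ (or of an auxiliary parameter) and is where the $D_r(w)$ hypothesis is used in full strength, so that the final constant is $C\|a\|\,\tilde a(Q_0)$ with no residual dependence on the number of generations or on $t$. The $A_\infty$ hypothesis on $w$ enters only through its doubling in the dyadic reduction and, if a finer estimate is needed, in comparing the $\mu$-measure data from the selection with $w$-measures; everything else — the Christ-cube reduction, the telescoping, and the disjointness within a generation — is routine.
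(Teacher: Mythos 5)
Theorem~\ref{weakimproving} is quoted from \cite{MacManus1998} and is not proved in this paper, so there is no ``paper's own proof'' to compare your attempt against; the whole point of Section~\ref{sec:improvement} is to \emph{replace} this statement (or rather its strong-type descendant, Theorem~\ref{PRimprovement}) by one whose hypotheses do not involve $A_\infty$.

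That said, your sketch is in the right family of ideas for the FPW/MacManus--P\'erez circle of arguments (dyadic reduction, pointwise telescoping, stopping-time exploitation of the $D_r(w)$ packing condition), and you correctly locate the final bookkeeping as the crux. But there is a genuine gap there, and you have misdiagnosed where the $A_\infty$ hypothesis acts. Iterating your single-generation bound $\sum_{Q\text{ child of }P}\tilde a(Q)^r w(Q)\le\|a\|^r\tilde a(P)^r w(P)$ over $k$ generations gives $\sum_{\mathcal F_k}\tilde a(Q)^r w(Q)\le\|a\|^{kr}\tilde a(Q_0)^r w(Q_0)$, which \emph{grows} with $k$: Definition~\ref{Dr} forces $\|a\|\ge1$ (take a single sub-ball; the paper remarks this explicitly right after the definition). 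No choice of the stopping parameter $\Lambda$ can manufacture a geometric gain out of a condition that never produces a factor below $1$. The gain across generations comes from elsewhere, namely the quantitative $A_\infty$ property $w(E)/w(Q)\le C(\mu(E)/\mu(Q))^{\delta}$ of display~\eqref{Ainfty}, applied to the nested Calder\'on--Zygmund exceptional sets, whose $\mu$-measures do decay geometrically at each generation. Your write-up relegates $A_\infty$ to ``doubling in the dyadic reduction,'' with the key inequality \eqref{Ainfty} mentioned only as something that \emph{might} be needed ``if a finer estimate is needed''; in fact it is the engine of the proof, and for a $w$ that is merely doubling the statement fails. This is precisely the point the paper is making in Section~\ref{sec:improvement}: the $A_\infty$ hypothesis is used only through \eqref{Ainfty}, and the paper's novelty is to substitute for it the elementary functional estimate $w_r(E)\le(|E|/|Q|)^{1/r'}w_r(Q)$ of~\eqref{w_r1}, which holds for every weight. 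Until you make the use of \eqref{Ainfty} (or a surrogate) explicit in the generation-by-generation estimate, the absorption does not close and the argument is incomplete.
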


We remark that this result implies, by Kolmogorov's inequality, a strong inequality for any  $p<r$. It is also known that for some special functionals $a$ it is possible to obtain  the corresponding strong inequality with exponent $r$ from the weak one through the so called truncation method. Indeed, actually the examples of functionals we are going to consider are among these functionals (see for instance \cite{Hajlasz2000} and \cite{Dyda2016}, where the so-called Maz'ya's truncation method or the ``weak implies strong'' argument is used).

Recently, in \cite{Perez2018}, the authors have proved a better self-improvement in the Euclidean case given that the functional $a$ satisfies a stronger geometric condition, which we state below. First we have to give the notion of smallnes of a disjoint family of subcubes of a given cube $Q$.
\begin{defi}
Let $L>1$ and let $Q$ be a cube. We will say that a family of pairwise disjoint subcubes $\{Q_i\}$ of $Q$ is $L$-small if
\[
\frac{\left|\bigcup_{i}Q_i\right|}{|Q|}\leq \frac{1}{L}.
\]
We denote this by $\{Q_i\}\in S(L,Q)$.
\end{defi}
 The modified notion of $D_r$-type condition is the following one.
 \begin{defi}\label{SDr}
 Let $w$ be a weight and $s>1$. We say that the functional $a$ satisfies the weighted $SD^s_r(w)$ condition $0\leq r<\infty$ if there is a finite constant $C>0$ such that for any cube $Q$ and any family $\{Q_i\}\in S(L,Q)$, $L>1$, the following inequality holds:
 \[
 \sum_ia(Q_i)^rw(Q_i)\leq C^r\left(\frac{1}{L}\right)^{r/s}a(Q)^rw(Q).
 \]
 We write in this case $a\in SD^s_r(w)$ and we say that $a$ preserves the smallness condition of the family of cubes. As before, the smallest $C$ for which this happens will be denoted by $\|a\|$. 
 \end{defi}

Observe that now $\|a\|$ does not need to be larger than $1$.

Under these conditions, the authors prove the following
result.
\begin{lettertheorem}\label{PRimprovement}
Let $w\in A_\infty$. Consider also the functional $a$ such that for some $p\geq 1$ it satisfies the weighted condition $SD_p^s(w)$ with $s>1$ and constant $\|a\|$. Let $f$ be a locally integrable function such that
\[
\dashint_Q |f-f_Q|\leq a(Q),
\]
for every cube $Q$. Then, there exists a dimensional constant $C_n$ such that, for any cube $Q$
\begin{equation}\label{mejora}
\left(\dashint_Q|f-f_Q|^pdw\right)^{1/p}\leq C_ns\|a\|^sa(Q).
\end{equation}
\end{lettertheorem}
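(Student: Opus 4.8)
The plan is to prove the displayed inequality directly, by a Calder\'on--Zygmund stopping-time decomposition of $Q$ followed by a geometric iteration whose contractivity is furnished precisely by the \emph{smallness} encoded in the $SD_p^s(w)$ condition (a plain $D_p(w)$ condition would only yield the weak-type conclusion of Theorem \ref{weakimproving}, and the extra smallness is exactly what upgrades it to a strong estimate); the $A_\infty$ hypothesis will in fact play no role. Fix a cube $Q$ and a free parameter $\Lambda>1$, to be optimized at the end. I would construct a stopping tree rooted at $Q$: set $\mathcal{F}_0=\{Q\}$, and, given $P\in\mathcal{F}_k$, let $\mathrm{ch}(P)$ be the collection of maximal dyadic subcubes $P'\subset P$ with $\dashint_{P'}|f-f_P|>\Lambda\,\dashint_{P}|f-f_P|$, and put $\mathcal{F}_{k+1}=\bigcup_{P\in\mathcal{F}_k}\mathrm{ch}(P)$. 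Two elementary observations drive everything. First, the $L^1$ Calder\'on--Zygmund estimate gives $\sum_{P'\in\mathrm{ch}(P)}|P'|\le\Lambda^{-1}|P|$, so $\mathrm{ch}(P)\in S(\Lambda,P)$. Second, maximality (the dyadic parent of $P'$ fails the stopping test) together with the hypothesis $\dashint_{P}|f-f_P|\le a(P)$ yields $\dashint_{P'}|f-f_P|\le 2^n\Lambda\,\dashint_{P}|f-f_P|\le 2^n\Lambda\,a(P)$ for each $P'\in\mathrm{ch}(P)$, hence $|f_{P'}-f_P|\le 2^n\Lambda\,a(P)$.

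Next I would pass to a pointwise bound on $Q$. For a.e.\ $x\in Q$, let $Q=Q^0(x)\supset Q^1(x)\supset\cdots$ be the (possibly finite) chain of stopping cubes through $x$, one per generation. Telescoping and the second observation give
\[
|f(x)-f_Q|\le\sum_{k}\bigl|f_{Q^{k+1}(x)}-f_{Q^k(x)}\bigr|\le 2^n\Lambda\sum_{k}a\bigl(Q^k(x)\bigr),
\]
the sum running over the chain; the terminal term is handled in the usual way (at a generation where $x$ escapes every child of $Q^k(x)$, Lebesgue differentiation gives $|f(x)-f_{Q^k(x)}|\le\Lambda\,a(Q^k(x))$, and along an infinite chain $f_{Q^k(x)}\to f(x)$ a.e.). Writing $g_k:=\sum_{P\in\mathcal{F}_k}a(P)\chi_P$, this reads $|f-f_Q|\le 2^n\Lambda\sum_{k}g_k$ a.e.\ on $Q$, so Minkowski's inequality in $L^p\bigl(Q,\,w\,dx/w(Q)\bigr)$ gives
\[
\left(\dashint_Q|f-f_Q|^p\,dw\right)^{1/p}\le 2^n\Lambda\sum_{k\ge0}\left(\frac{1}{w(Q)}\int_Q g_k^p\,w\,dx\right)^{1/p}.
\]

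The crux is to control each summand. Since the cubes of $\mathcal{F}_k$ are pairwise disjoint subsets of $Q$, one has $\int_Q g_k^p\,w\,dx=\sum_{P\in\mathcal{F}_k}a(P)^p w(P)$, and I claim the recursion
\[
\sum_{P\in\mathcal{F}_k}a(P)^p w(P)\le\bigl(\|a\|\,\Lambda^{-1/s}\bigr)^{pk}a(Q)^p w(Q).
\]
This I would prove by induction on $k$ (the case $k=0$ being an equality): for the inductive step, split $\mathcal{F}_{k+1}$ according to its parents in $\mathcal{F}_k$ and apply the $SD_p^s(w)$ condition to each family $\mathrm{ch}(P)\in S(\Lambda,P)$, obtaining $\sum_{P'\in\mathrm{ch}(P)}a(P')^p w(P')\le\|a\|^p\Lambda^{-p/s}a(P)^p w(P)$, then sum over $P\in\mathcal{F}_k$ and invoke the inductive hypothesis. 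Feeding this back and summing the geometric series (legitimate once $\Lambda>\|a\|^s$) gives
\[
\left(\dashint_Q|f-f_Q|^p\,dw\right)^{1/p}\le\frac{2^n\Lambda}{1-\|a\|\,\Lambda^{-1/s}}\,a(Q).
\]
It then remains to optimize: with $u:=\|a\|\,\Lambda^{-1/s}\in(0,1)$ the constant equals $2^n\|a\|^s\bigl(u^s(1-u)\bigr)^{-1}$, and $u^s(1-u)$ is maximized at $u=s/(s+1)$, where it equals $(1+1/s)^{-s}(s+1)^{-1}\ge\bigl(e(s+1)\bigr)^{-1}$; the corresponding choice $\Lambda=\|a\|^s(1+1/s)^s$ then gives \eqref{mejora} with, e.g., $C_n=2^{n+1}e$.

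The main obstacle is not any single estimate but making the iteration genuinely \emph{contractive}: one must stop at a super-averaged height $\Lambda>1$ rather than at the natural height $\sim\dashint_P|f-f_P|$, so that the $SD_p^s(w)$ condition contributes the decaying factor $\Lambda^{-p/s}$, and one must then track the dependence on $\Lambda$ through the geometric sum and optimize it precisely. A careless choice (say $\Lambda$ comparable to a fixed constant) costs a factor comparable to $2^s$ rather than the linear $s$ of the statement, so the choice $u=s/(s+1)$ is the whole point; this is also where the difference between $SD_p^s(w)$ and a plain $D_p(w)$ condition is felt, since the latter produces only $\|a\|^{pk}$, which does not decay in general. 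Minor care is also needed in the pointwise bound at points where the chain terminates, and in the reduction to $f\in L^p(Q,w)$, which one can make by first truncating $f$ since all the constants above are uniform.
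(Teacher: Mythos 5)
Your proof is correct, and it takes a genuinely different route from the paper's (which follows \cite{Perez2018}). The paper performs a \emph{single-level} Calder\'on--Zygmund stopping of $(f-f_Q)/a(Q)$ at height $L$, estimates the contribution of the stopping cubes by $X\,\|a\|/L^{1/s}$ where $X=\sup_P\bigl(w(P)^{-1}\int_P|(f-f_P)/a(P)|^p\,w\bigr)^{1/p}$ is a supremum over \emph{all} cubes, and then absorbs. That absorption is legitimate only once $X<\infty$ is known a priori; this is the \emph{only} place where $w\in A_\infty$ enters in \cite{Perez2018} (via a perturbation $a_\varepsilon=a+\varepsilon$ and the absolute-continuity estimate \eqref{Ainfty}), and where the $w_r$ machinery enters in the present paper. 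You instead iterate the stopping over all generations $\mathcal F_k$, derive the pointwise bound $|f-f_Q|\le 2^n\Lambda\sum_k g_k$ with $g_k=\sum_{P\in\mathcal F_k}a(P)\chi_P$, control $\|g_k\|_{L^p(w)}$ by applying the $SD_p^s(w)$ condition once per generation to get geometric decay in $k$, and sum the resulting series. Since Minkowski's inequality for infinite sums holds unconditionally and each $\|g_k\|_{L^p(w)}$ is finite by construction, no a priori finiteness of the left-hand side is ever needed: the geometric series directly furnishes the finite bound. (Your closing remark about truncating $f$ is therefore superfluous.)

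This difference is more than cosmetic. As written, your argument nowhere invokes $w\in A_\infty$: it works for an \emph{arbitrary} weight $w$ under the $SD_p^s(w)$ hypothesis and yields the honest $w(Q)$-normalized conclusion $\bigl(\dashint_Q|f-f_Q|^p\,dw\bigr)^{1/p}\le C_n s\|a\|^s a(Q)$ --- precisely the P\'erez--Rela conjecture recalled in the introduction, and strictly more than the paper's Theorem \ref{thm:Lp(w)-a(Q)-clean-sht-euclidean}, which pays for dropping $A_\infty$ with the larger normalization $w_r(Q)$ and the $SD_p^s(w_r)$ hypothesis. One small caveat: your choice $\Lambda=\|a\|^s(1+1/s)^s$ needs $\Lambda>1$ to make the stopping nontrivial, so you implicitly assume $\|a\|\ge 1$ (harmless, since one can always enlarge the $SD_p^s(w)$ constant; the paper handles the same point by taking $L=2e\max\{\|a\|^s,1\}$); with that convention the optimization at $u=s/(s+1)$ together with $(1+1/s)^s\le e$ gives $C_n=2^{n+1}e$, matching the paper.
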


The authors remark that the $A_\infty$ condition should be avoidable, as the $A_\infty$ constant does not appear in \eqref{mejora}. Actually, the $A_\infty$  condition is just used to prove that some a priori quantity is finite. The main theorem of the present paper avoids the artifice   of the $A_\infty$ condition in the proof by another one, which turns out to give a slightly more general result thanks to the reverse H\"older condition of $A_\infty$ weights. The result we will obtain will give  us a way to obtain  improved Poincar\'e inequalities with weights without using  the truncation method.

In the following, we give some results which can be obtained  as a byproduct of our generalization of Theorem \ref{PRimprovement}. We  start with the weighted improved Poincar\'e inequalities obtained in \cite{Drelichman2008}. The main result the authors prove there is, with our notation, the following.

\begin{lettertheorem}
Let $\Omega\subset\mathbb{R}^n$ be a bounded John domain and let $1<p<q<\infty$. If $(w,v)\in A_{q,p}^{1-\alpha,1}(\mathbb{R}^n)$ and $w,v^{1-p'}$ are reverse doubling weights, then
\begin{equation}\label{DD}
\inf_{a\in\mathbb{R}}\|f-a\|_{L^q(\Omega,w)}\leq C \||\nabla f|d^\alpha\|_{L^p(\Omega,v)},
\end{equation}
for all locally Lipschitz $f\in L^q(\Omega,w)$. If  $p=q$, then the result is obtained for weights $w$ and $v$ such that  $w,v^{1-p'}$ are reverse doubling weights and
\begin{equation}\label{DDp=q}
\sup_Q\ell(Q)^{\alpha}|Q|^{\frac{1}{q}-\frac{1}{p}}\left(\dashint_Qw^r\right)^{1/qr}\left(\dashint_Qv^{(1-p')r}\right)^{1/p'r} <\infty,
\end{equation}
 for some $r>1$.
\end{lettertheorem}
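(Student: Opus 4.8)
The argument I would give follows the standard local-to-global scheme for John domains, with the generalized self-improving theorem of this paper doing the local work. First realize the bounded John domain $\Omega$ as a Boman chain domain and choose the chain decomposition so that every cube $Q$ in it still satisfies property~(W2) of Lemma~\ref{Whitney}; on such a cube $d(x)\asymp\ell(Q)$ for all $x\in Q$, so the improving weight $d^{\alpha}$ collapses on $Q$ (and on a fixed dilate $\sigma Q$) to the constant $\ell(Q)^{\alpha}$, up to multiplicative constants. By Theorem~\ref{Chua}, with $w$ as the doubling weight on the left and $\nu=v\,dx$ on the right, it then suffices to establish the local estimate
\[
\|f-f_Q\|_{L^{q}(Q,w)}\ \lesssim\ \ell(Q)^{\alpha}\,\big\|\,|\nabla f|\,\big\|_{L^{p}(\sigma Q,v)}
\]
with a constant independent of $Q$; summing over the chain and using $\inf_{a\in\mathbb{R}}\|f-a\|_{L^{q}(\Omega,w)}\le\|f-f_{Q_0}\|_{L^{q}(\Omega,w)}$ then yields \eqref{DD} and, in the borderline range, its $p=q$ companion.

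For the local estimate I would feed the self-improving machinery. The input is the classical $(1,1)$-Poincar\'e inequality $\dashint_{Q'}|f-f_{Q'}|\,dx\lesssim\ell(Q')\dashint_{Q'}|\nabla f|\,dx$ on every subcube $Q'\subseteq Q$, available because $f$ is locally Lipschitz. Introducing $v$ and applying H\"older with exponents $p,p'$,
\[
\dashint_{Q'}|f-f_{Q'}|\,dx\ \lesssim\ \ell(Q')\Big(\dashint_{Q'}|\nabla f|^{p}d^{\alpha p}v\Big)^{1/p}\Big(\dashint_{Q'}d^{-\alpha p'}v^{1-p'}\Big)^{1/p'};
\]
using $d\asymp\ell(Q)$ on $Q$, the hypothesis on $(w,v)$ (the condition $A_{q,p}^{1-\alpha,1}(\mathbb{R}^{n})$ when $p<q$, and \eqref{DDp=q} with some $r>1$ when $p=q$), and $\ell(Q')\le\ell(Q)$, $\alpha\ge0$ to absorb the gap between $\ell(Q)^{-\alpha}$ and $\ell(Q')^{-\alpha}$, the right-hand side becomes, up to the relevant Muckenhoupt constant, a functional of the form
\[
a(Q')\ :=\ \big(|Q'|\,(\dashint_{Q'}w^{r})^{1/r}\big)^{-1/q}\,\big\|\,|\nabla f|\,d^{\alpha}\big\|_{L^{p}(Q',v)}
\]
($r=1$ in the case $p<q$). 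This is precisely the shape the generalized self-improving theorem of the paper requires, with the role of ``$p$'' there played by $q$, and it remains to verify the weighted smallness condition \eqref{weighted_smallness} for $a$. The essential ingredients are: $E\mapsto\int_{E}|\nabla f|^{p}d^{\alpha p}v\,dx$ is an additive set function; $t\mapsto t^{q/p}$ is superadditive since $q\ge p$, so that $\sum_{j}(\int_{Q_{j}}\cdots)^{q/p}\le(\sum_{j}\int_{Q_{j}}\cdots)^{q/p}$ over a disjoint family $\{Q_{j}\}\subseteq Q$; and the reverse-doubling of $w$ and of $v^{1-p'}$ controls $\dashint_{Q_{j}}w^{r}$ and $\dashint_{Q_{j}}v^{1-p'}$ in terms of the corresponding averages over $Q$. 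Combined with the Muckenhoupt bound these give \eqref{weighted_smallness} with a finite constant. The theorem then produces $\big(\tfrac{1}{W(Q)}\int_{Q}|f-f_{Q}|^{q}\,dw\big)^{1/q}\lesssim a(Q)$ with $W(Q)=|Q|(\dashint_{Q}w^{r})^{1/r}$, and since $W(Q)^{1/q}a(Q)=\big\|\,|\nabla f|\,d^{\alpha}\big\|_{L^{p}(Q,v)}\asymp\ell(Q)^{\alpha}\big\|\,|\nabla f|\,\big\|_{L^{p}(Q,v)}$, this is the required local inequality.

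Two comments. For $p=q$ the exponent gap that provides slack when $p<q$ is gone, so one can no longer afford the loss in Jensen's inequality $\dashint_{Q}w\le(\dashint_{Q}w^{r})^{1/r}$; this is exactly why one must work throughout with the reverse-H\"older regularization $W(Q)=|Q|(\dashint_{Q}w^{r})^{1/r}$ and with condition \eqref{DDp=q}, whose surplus integrability $r>1$ on both $w$ and $v^{1-p'}$ plays the role the exponent jump played for $p<q$ (the H\"older step is then arranged, with the exponent $r$, so as to land precisely on \eqref{DDp=q}). As for difficulty: I expect the verification of \eqref{weighted_smallness} for $a$ to be the main obstacle---this is the step where the passage from $L^{p}$ control of $\nabla f$ to $L^{q}$ control of $f$ must be reconciled with the oscillation of the two weights, and the only step where the reverse-doubling hypotheses are genuinely used, which is made possible by the argument taking place inside a single Whitney cube where $d\asymp\ell$. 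A more routine point is to confirm the applicability of Theorem~\ref{Chua}/\ref{Chua2}: that $w$ is a doubling weight and that the dilated chain cubes $\sigma Q$ still satisfy (W2), both of which follow from the John condition and the structural hypotheses on $(w,v)$.
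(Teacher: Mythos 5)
This statement is not a result the paper proves: it is Theorem~D, a verbatim quotation of the main theorem of \cite{Drelichman2008}, stated in the preliminaries section so the author can compare his Theorem~\ref{unified} against it. Indeed, the paper's own remark after Theorem~\ref{unified} explicitly says that the present machinery \emph{does not} reproduce \eqref{DD} in the form stated: with the self-improving route one is ``forced to satisfy \dots the condition $(w,v)\in A_{q,p}^{s-\gamma,r}$, for some number $r$ strictly larger than $1$, in contrast with the result in \cite{Drelichman2008}, where the authors are able to consider the case in which $r=1$.'' In \cite{Drelichman2008} the theorem is proved by an entirely different route (a representation formula via a fractional integral, duality, and the local-to-global chaining). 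Your proposal, by contrast, tries to route it through Theorem~\ref{thm:Lp(w)-a(Q)-clean-sht-euclidean}, which is the very thing the author says cannot be done under these hypotheses.

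There are two concrete obstructions in your argument. First, Theorem~\ref{thm:Lp(w)-a(Q)-clean-sht-euclidean} is stated with $r>1$, and this is not cosmetic: the entire absorption/perturbation argument hinges on property \eqref{w_r1}, namely $w_r(E)\le(|E|/|Q|)^{1/r'}w_r(Q)$, whose exponent $1/r'$ vanishes when $r=1$. With $r=1$ there is no smallness gain for $w_r(\bigcup_jQ_j)$ and $X_\varepsilon$ cannot be shown finite, so the a~priori quantity $X$ never gets controlled. You yourself set $r=1$ for the case $p<q$, which is exactly the regime where the paper's theorem does not apply. Second, your verification of the $SD$-condition is incomplete. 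With $a(Q')=w_r(Q')^{-1/q}\nu(Q')^{1/p}$, superadditivity of $t\mapsto t^{q/p}$ gives only
\[
\sum_j a(Q_j)^q\,w_r(Q_j)=\sum_j\nu(Q_j)^{q/p}\le\nu(Q)^{q/p}=a(Q)^q\,w_r(Q),
\]
that is, the unqualified $D_q(w_r)$ condition with no factor $L^{-q/s}$. The smallness factor in the paper's own application (Theorem~\ref{unified}) comes from keeping an explicit $\ell(Q')^{\gamma}$ \emph{outside} the integral, which is exactly the power you absorbed into $d^\alpha$. Reverse doubling of $w$ and $v^{1-p'}$ controls measures of nested sets, not the averages $\dashint_{Q_j}w^r$ or $\dashint_{Q_j}v^{1-p'}$ in the direction you need, so it does not rescue the smallness. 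Finally, the chaining step you invoke (Theorem~\ref{Chua}/\ref{Chua2}) requires $w$ to be doubling, whereas the theorem you are trying to prove assumes only reverse doubling of $w$; these are not equivalent, so even that step is not available under the stated hypotheses.
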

As the authors remark, here we may assume $q\leq \frac{np}{n-p(1-\alpha)}$, since otherwise $w$ equals zero almost everywhere on $\{v<\infty\}$, as it was observed in \cite[Remark b]{Sawyer1992}.

We are able to obtain, in Theorem \ref{unified}, inequality \eqref{DD} under the assumptions $(w,v)\in A_{q,p}^{1-\alpha,r}(\Omega)$, $w$  doubling and $r>1$. Note that no extra assumptions are needed in $v$ and also that $A_{p,p}^{1-\alpha,r}(\Omega)$ is weaker than \eqref{DDp=q}.  Also, our result allows us to obtain a $(w_\phi w,w_{\Phi,\alpha}v)$-weighted improved version, where $\Phi(t)=\phi(t)^{\frac{p}{q}}$, $\alpha\in [0,1]$ and $(w,v)\in A_{q,p}^{1-\alpha,r}(\Omega)$, with $w$ a doubling weight. 

The second result we are going to focus on is  the recent improved fractional Poincar\'e-Sobolev inequality obtained in \cite{Cejas2019} in the general context of Ahlfors-David regular metric spaces. In the context of the Euclidean space, the result they obtain reads as follows.
\begin{lettertheorem}\label{Euclidean case2}
Let $\Omega$ in $\mathbb{R}^n$ be a bounded John domain and $1<p<\infty$. Given the parameters $s,\tau\in(0,1)$, $0\leq \gamma <s$ such that $(s-\gamma)p<n$ and $\phi$ an increasing function with $\phi(2t)\leq \phi(t)$ and such that $w_\phi\in L^1_{loc}(\Omega)$, if we define $q=\frac{np}{n-(s-\gamma)p}$, the inequality 
\[
\begin{split}
\inf_{c\in \mathbb{R}}\|u-c&\|_{L^{q}(\Omega,w_\phi)}\lesssim\left(\int_\Omega\int_{\{z\in \Omega:|z-y|\leq \tau d(y)\}}\frac{|u(z)-u(y)|^p}{|z-y|^{n+sp}}v_{\Phi,\gamma}(z,y)dzdy\right)^{\frac{1}{p}},
\end{split}
\]
holds for any function $u\in W^{s,p}(\Omega,dx)$, where $\Phi(t)=\phi(t)^{\frac{p}{q}}$.
\end{lettertheorem}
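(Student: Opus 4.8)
The plan is to deduce Theorem~\ref{Euclidean case2} from our generalization of Theorem~\ref{PRimprovement} (that is, from~\eqref{this_result} under the hypothesis~\eqref{weighted_smallness}) by establishing a \emph{local} inequality on the cubes of a chain decomposition of $\Omega$ and then globalizing it through Theorem~\ref{Chua2}. Write $\Phi(t)=\phi(t)^{p/q}$; note that $\gamma<s$ forces $q>p$. As recalled just before Theorem~\ref{Chua2}, we may fix a chain decomposition $\mathcal W$ of the bounded John domain $\Omega$ whose cubes are dilations of Whitney cubes, so that $d(x)\asymp\ell(Q)$ for $x\in\sigma Q$, $Q\in\mathcal W$. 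The improving weight $w_{\Phi,\gamma}$ is of the form $w_\Psi$ with $\Psi(t)=t^{\gamma}\phi(t)^{p/q}$ increasing and doubling, so $w_\phi$ and $w_{\Phi,\gamma}$ are admissible in Theorem~\ref{Chua2} (with underlying doubling weight $1$ and $\nu=dx$). Applying that theorem to $g:=|\nabla_{s,p,\Omega}^{\tau'}u|^{1/p}$ reduces the whole problem to the local estimate
\begin{equation}\label{plan:local}
\|u-u_Q\|_{L^q(Q,w_\phi)}\lesssim\Big(\int_{\sigma Q}|\nabla_{s,p,\Omega}^{\tau'}u|(y)\,w_{\Phi,\gamma}(y)\,dy\Big)^{1/p},\qquad Q\in\mathcal W,
\end{equation}
with constant independent of $Q$; here $\tau'\in(0,1)$ is obtained from $\tau$ by the reparametrization coming from the Whitney structure. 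Indeed, once~\eqref{plan:local} holds, Theorem~\ref{Chua2} gives $\|u-u_{Q_0}\|_{L^q(\Omega,w_\phi)}\lesssim\big(\int_\Omega|\nabla_{s,p,\Omega}^{\tau'}u|(y)\,w_{\Phi,\gamma}(y)\,dy\big)^{1/p}$, and since on the region $|z-y|<\tau'd(y)$ one has $d(z)\asymp d(y)$ (so that $d(y)^{\gamma}\Phi(d(y))\asymp v_{\Phi,\gamma}(z,y)$, using that $\Phi$ is increasing and doubling), this quantity is comparable to the seminorm on the right-hand side of Theorem~\ref{Euclidean case2} up to a last comparison of truncated seminorms restoring the prescribed $\tau$.

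Now fix $Q\in\mathcal W$. Since $d\asymp\ell(Q)$ on $\sigma Q$ and $\phi$ (hence $\Phi$) is doubling, $w_\phi\asymp\phi(\ell(Q))$ and $w_{\Phi,\gamma}\asymp\ell(Q)^{\gamma}\phi(\ell(Q))^{p/q}$ there, so~\eqref{plan:local} is equivalent to $\|u-u_Q\|_{L^q(Q)}\lesssim\ell(Q)^{\gamma/p}\big(\int_{\sigma Q}|\nabla_{s,p,\Omega}^{\tau'}u|\big)^{1/p}$; since $\Omega$ is bounded and $\gamma\ge\gamma/p$, it suffices to prove the a~priori stronger
\begin{equation}\label{plan:unweighted}
\|u-u_Q\|_{L^q(Q)}\lesssim\ell(Q)^{\gamma}\Big(\int_{\sigma Q}|\nabla_{s,p,\Omega}^{\tau'}u|(y)\,dy\Big)^{1/p},
\end{equation}
the gap $\ell(Q)^{\gamma(1-1/p)}\le(\diam\Omega)^{\gamma(1-1/p)}$ being absorbed into the constant. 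To obtain~\eqref{plan:unweighted} we apply~\eqref{this_result} with exponent $q$ in place of $p$, trivial weight $1$ and $r=1$, to the functional
\[
a(Q):=c_n\,\ell(Q)^{s}\Big(\dashint_Q|\nabla_{s,p,Q}^{\tau}u|(y)\,dy\Big)^{1/p}.
\]
The starting inequality $\dashint_Q|u-u_Q|\,dx\le a(Q)$, valid for all cubes contained in $\Omega$, is the $L^1$ fractional Poincar\'e inequality on cubes restricted to $\tau$-balls (standard; see~\cite{Perez2018}). The conclusion of~\eqref{this_result} then reads $\dashint_Q|u-u_Q|^{q}\,dx\lesssim\ell(Q)^{sq}\big(\dashint_Q|\nabla_{s,p,Q}^{\tau}u|\big)^{q/p}$; multiplying by $|Q|$, using $sq+n-nq/p=\gamma q$ (equivalently $n/q-n/p=\gamma-s$), and bounding $|\nabla_{s,p,Q}^{\tau}u|(y)\lesssim|\nabla_{s,p,\Omega}^{\tau'}u|(y)$ for $y\in Q$ (valid because $Q\cap B(y,\tau\ell(Q))\subset\Omega\cap B(y,\tau'd(y))$, which is the source of the reparametrization) yields~\eqref{plan:unweighted}.

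The crux is that $a$ satisfies the hypothesis of~\eqref{this_result}, i.e.\ the condition $SD^{s'}_{q}(1)$ expressed by~\eqref{weighted_smallness} for some $s'>1$. For disjoint $\{Q_j\}\subset Q$ this rests on three elementary facts: (i) $|\nabla_{s,p,Q_j}^{\tau}u|(y)\le|\nabla_{s,p,Q}^{\tau}u|(y)$ for $y\in Q_j$, since $Q_j\cap B(y,\tau\ell(Q_j))\subset Q\cap B(y,\tau\ell(Q))$; (ii) the scaling identity $\ell(Q)^{sq}|Q|^{1-q/p}=\ell(Q)^{\gamma q}=|Q|^{\gamma q/n}$ (and likewise for each $Q_j$); (iii) the superadditivity of $t\mapsto t^{q/p}$ (recall $q>p$), which together with disjointness gives $\sum_j\big(\int_{Q_j}|\nabla_{s,p,Q}^{\tau}u|\big)^{q/p}\le\big(\int_Q|\nabla_{s,p,Q}^{\tau}u|\big)^{q/p}$. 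Combining them,
\[
\sum_j a(Q_j)^{q}|Q_j|=c_n^{q}\sum_j|Q_j|^{\gamma q/n}\Big(\int_{Q_j}|\nabla_{s,p,Q_j}^{\tau}u|\Big)^{q/p}\le c_n^{q}\Big|\bigcup_jQ_j\Big|^{\gamma q/n}\Big(\int_Q|\nabla_{s,p,Q}^{\tau}u|\Big)^{q/p}=\Big(\frac{|\bigcup_jQ_j|}{|Q|}\Big)^{\gamma q/n}a(Q)^{q}|Q|,
\]
so $a\in SD^{s'}_{q}(1)$ with $s'=n/\gamma>1$. This step genuinely uses $\gamma>0$; when $\gamma=0$ one obtains only the decay-free condition $D_q(1)$, in which case Theorem~\ref{weakimproving} (with $1\in A_\infty$) still yields the weak-type local estimate $\|u-u_Q\|_{L^{q,\infty}(Q,dx/|Q|)}\lesssim a(Q)$, from which~\eqref{plan:unweighted} (now with $\gamma=0$) follows by the truncation method as in~\cite{Dyda2016,Hajlasz2000} — alternatively, for $\gamma=0$ inequality~\eqref{plan:unweighted} is simply the classical critical fractional Sobolev--Poincar\'e inequality on cubes.

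I expect the main difficulty to be not any single inequality but the bookkeeping of the truncation parameters: passing from the cube-localized derivative $|\nabla_{s,p,Q}^{\tau}u|$ to the domain-localized $|\nabla_{s,p,\Omega}^{\tau'}u|$ on the cubes of the chain, arranging $\tau'\in(0,1)$ small enough that $d(z)\asymp d(y)$ on $\{|z-y|<\tau'd(y)\}$ — which is precisely what upgrades the single-variable weight $d(y)^{\gamma}\Phi(d(y))$ produced by Theorem~\ref{Chua2} to the two-variable weight $v_{\Phi,\gamma}(z,y)$ of the statement — and finally returning to the $\tau$ fixed in the hypothesis. The remaining ingredients (the $L^1$ fractional Poincar\'e on cubes, the verification of~\eqref{weighted_smallness} above, and the invocations of Theorems~\ref{Chua2} and~\ref{weakimproving}) are, by contrast, routine.
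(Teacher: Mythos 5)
Your proof is correct and follows the same three-pillar strategy that underlies the paper's proof of Theorem~\ref{unified}, of which Theorem~\ref{Euclidean case2} is precisely the special case $w=v\equiv1$: an $L^1$ fractional Poincar\'e starting inequality on cubes, the self-improvement via an $SD$-type smallness condition, and globalization through the chaining argument of Theorem~\ref{Chua2}, together with the Whitney comparison $d(y)\asymp\ell(Q)$ that converts the sidelength into a distance-to-the-boundary weight.

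A few remarks on where you diverge from, or refine, the paper's treatment. You verify $a\in SD^{n/\gamma}_q(1)$ by bounding $|Q_j|^{\gamma q/n}\leq|\bigcup_jQ_j|^{\gamma q/n}$ termwise and then invoking superadditivity of $t\mapsto t^{q/p}$; the paper uses a H\"older argument instead, but both give the same smallness exponent $s'=n/\gamma$ (the paper's ``$s=n/\alpha q$'' there is a typo for $n/\alpha$). You take $r=1$ in the application of Theorem~\ref{thm:Lp(w)-a(Q)-clean-sht-euclidean}, although the theorem as stated requires $r>1$; for Lebesgue measure this is harmless because $w_r(Q)=|Q|$ for every $r\geq1$, so one may take any $r>1$, or more simply appeal to Theorem~\ref{PRimprovement} directly since $1\in A_\infty$ --- but the slip is worth flagging. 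Finally, your explicit treatment of the endpoint $\gamma=0$, where the smallness condition degenerates to the decay-free $D_q(1)$ and one must instead pass through the weak-type estimate of Theorem~\ref{weakimproving} and the truncation method, is a genuine refinement: the paper's verification of the $SD$ condition implicitly divides by $\alpha=\gamma$ and therefore requires $\gamma>0$, a point the paper leaves unaddressed even though $\gamma=0$ is allowed in the hypotheses of Theorem~\ref{Euclidean case2}.
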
 
Their result is based on an appropriate representation formula, duality and the boundedness of the Riesz potential. Our approach avoids any of these facts and in particular avoids any representation formula. Also, we are able to obtain the corresponding $(w_\phi w,w_{\Phi,\gamma}v)$-weighted version of the inequality, where $(w,v)\in A_{q,p}^{s-\gamma,r}(\Omega)$ and $w$ is a doubling weight. Thus, we improve the results in \cite{Cejas2019}  for the special case where $X$ is the Euclidean space and $F$ is equal to $\partial\Omega$.

The fundamental idea for obtaining our results is to obtain a suitable starting point to use the self-improving theory. Then, by applying a more general version of Theorem \ref{PRimprovement}, we obtain an improvement of the starting point on cubes of the domain, and so, by concatenating these self-improvements on Whitney cubes of  Boman chains of a John domain by means of Theorem \ref{Chua2}, we can obtain the weighted improved Poincar\'e inequality on the whole domain. 

\section{Main theorem: self-improvements without the $A_\infty$ assumption}\label{sec:improvement}

As we mentioned above, apparently the $A_\infty$ condition is not actually   needed for obtaining the self-improvement  result in Theorem \ref{PRimprovement}. Actually, the only reason why the $A_\infty$ condition is used in their argument is in order to get that some intermediate quantity considered in the proof is finite. Once this is obtained, the rest of the argument does not need this condition on the weight. The precise property which is needed from the weight is the fact that, given an $A_\infty$ weight $w$, one has the existence of constants $C,\delta>0$ such that
\begin{equation}\label{Ainfty}
\frac{w(E)}{w(Q)}\leq C\left(\frac{|E|}{|Q|}\right)^{\delta}
\end{equation}
holds for any cube $Q$ and any measurable subset $E\subset Q$.  Actually, this is the original definition of the $A_\infty$ class of weights which turns out to be equivalent to saying that the Fujii-Wilson's  quantity \eqref{Ainfty_constant} is finite.

Thanks to this, one is able to prove that, for a given functional $a$ satisfying the smallness condition $SD_p^s(w)$ in Definition \ref{SDr}, the perturbation $a_\varepsilon$ of $a$, defined as $a_\varepsilon(Q):=a(Q)+\varepsilon$ for any cube $Q$ and any $\varepsilon>0$, also satisfies a smallnes condition $SD_p^{\tilde{s}}(w)$,  where $\tilde{s}$ is a constant bigger than $s$ and which depends on the $A_\infty$ constant of $w$. This allows to prove that the quantity we referred to above is finite independently of the value of $\varepsilon$, and then, by taking limit when $\varepsilon$ goes to $0$, one is able to obtain the desired result in Theorem \ref{PRimprovement}.

What we will do is to use the same idea but replacing the weight $w$ with a functional defined by using the weight  and satisfying a condition like \eqref{Ainfty}.  Thus, we are going to be able to perform the same argument for any functional satisfying a smallness condition with respect to this new functional, and then we will obtain a result in the spirit of Theorem \ref{PRimprovement} without assuming the $A_\infty$ property on the weight.

To be precise, we will work, for an $r>1$ and a weight $w\in L^1_{\loc}(\mathbb{R}^n)$, with the functional $w_r$ given by the formula
\[
w_r(Q)=|Q|\left(\dashint_Q w^r\right)^{1/r}.
\]
This kind of functionals already appeared in some works as for instance   \cite{Perez1995,Cruz-Uribe2000,Cruz-uribe2002}, in which the authors study sufficient conditions for the two-weighted weak and strong-type (respectively) boundedness of fractional integrals, Calder\'on-Zygmund operators and commutators. There, one can find the following straightforward properties of $w_r$: 
\begin{enumerate}
\item $w(E)\leq w_r(E)$ for any measurable nonzero measure set $E$.
\item If $E\subset F$ are two nonzero measure sets, then 
\begin{equation}\label{w_r1}
w_r(E)\leq \left(\frac{|E|}{|F|}\right)^{1/r'}w_r(F).
\end{equation}
\item If $E=\bigcup_i E_i$ for some disjoint family $\{E_i\}_i$, then 
\begin{equation}\label{w_r2}
\sum_i w_r(E_i) \leq w_r(E).
\end{equation}
\end{enumerate}
Condition \eqref{w_r1} is what will allow us to work with perturbations of a functional $a$ without assuming the $A_\infty$ condition on the weight. However, we have to ask $a$ to satisfy an adapted $SD_p^s$ condition. 
\begin{defi}\label{def:SDp}
Let  $s>1$ and $r\geq1$ and let $w$ be any   weight. We say that the functional $a$ satisfies the weighted $SD^s_p(w_r)$ condition for $0\le p<\infty$ if there is a constant $C$ such that for any cube (or ball) $Q$ and any family $\{Q_i\}$ of pairwise disjoint subcubes (resp. subballs) of $Q$ such that $\{Q_i\}\in S(L,Q)$, the following inequality holds:
\begin{equation}\label{eq:SDp}
\sum_{i}a(Q_i)^pw_r(Q_i)\le C^p \left (\frac{1}{L}\right )^{\frac{p}{s}}a(Q)^pw_r(Q).
\end{equation}
The best possible constant $C$ above is denoted by $\|a\|$ and also we will write in this case that $a\in SD^s_p(w_r)$.  
\end{defi}

One should note that, for $A_\infty$ weights, $SD_p^s(w)$ and $SD_p^s(w_r)$ conditions are equivalent if we take $r\in[1,1+\varepsilon_w]$, where $\varepsilon_w>0$ depends on the $A_\infty$ condition on $w$. This comes from the fact that every $A_\infty$ weight is in a reverse H\"older class $\RH_{r_w}$, $r_w>1$, and then one has that $w(Q)\asymp w_r(Q)$   for every cube $Q$ and any $r\in[1,r_w]$.  Hence our result contains the main result in \cite{Perez2018}.

The result we obtain is the following one. 
\begin{thm}\label{thm:Lp(w)-a(Q)-clean-sht-euclidean}

Let $w$ be any weight on $\mathbb{R}^n$. Consider also a functional $a\in SD^s_q(w_r)$ with $s,r>1$, $q>0$ and constant $\|a\|$. Let $f$ be a locally integrable function such that
\begin{equation}\label{startingpoint}
\frac{1}{|Q|}\int_{Q} |f-f_{Q}|dx \le a(Q),
\end{equation}
for every cube $Q$.
Then, there exists a dimensional constant $C_{n}$ such that for any  cube $Q$%
\begin{equation}\label{eq:First main estimate}
\left( \frac{1}{ w_r(Q)  } \int_{ Q }   |f -f_{Q}|^q     \,w\right)^{\frac{1}{q}}   \leq  C_{n}\, s \|a\|^s  a(Q).
\end{equation}
\end{thm}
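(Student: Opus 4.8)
The plan is to run the Calder\'on--Zygmund stopping-time argument used for Theorem~\ref{PRimprovement} in \cite{Perez2018}, with the weight $w$ replaced everywhere by the set functional $w_r$; the elementary properties of $w_r$ recalled above --- $w(E)\le w_r(E)$, \eqref{w_r1} and \eqref{w_r2} --- are exactly what is needed to do this for an arbitrary $w$, with \eqref{w_r1} playing the role that the $A_\infty$ estimate \eqref{Ainfty} (here with $\delta=1/r'$) plays in \cite{Perez2018}. I would fix a cube $Q_0$ and, since it is arbitrary, reduce to proving
\[
\mathcal{N}:=\sup_{Q\subseteq Q_0}\frac{1}{a(Q)\,w_r(Q)^{1/q}}\Big(\int_{Q}|f-f_{Q}|^{q}\,w\Big)^{1/q}\ \le\ C_{n}\,s\,\|a\|^{s},
\]
the supremum being over the dyadic subcubes of $Q_0$ with $a(Q)>0$. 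The first --- and, I expect, only real --- obstacle is to know that $\mathcal N$ is finite a priori; this is the single place where $A_\infty$ is used in \cite{Perez2018}, and the whole point is to avoid it. I would discard the trivial case $w_r(Q_0)=\infty$, truncate $f$ to $f_N=\max(-N,\min(N,f))$ --- which still satisfies \eqref{startingpoint} up to an absolute constant (truncation is $1$-Lipschitz) and has $\int_Q|f_N-(f_N)_Q|^qw\le(2N)^qw(Q)\le(2N)^qw_r(Q)$ since $w(E)\le w_r(E)$ --- and replace $a$ by $a_\varepsilon:=a+\varepsilon$, so that $\mathcal N\le 2N/\varepsilon<\infty$ for the truncated--perturbed data. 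Using \eqref{w_r1} and \eqref{w_r2} one checks that $a_\varepsilon$ still satisfies a smallness condition of the type in Definition~\ref{def:SDp} with constant and exponent independent of $\varepsilon$, so the estimate below applies; letting $\varepsilon\to0$ and then $N\to\infty$ (Fatou) recovers the statement for $f$ and $a$.

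For the core estimate, fix a dyadic subcube $Q\subseteq Q_0$ and a free parameter $L>1$, and perform the Calder\'on--Zygmund decomposition of $(f-f_Q)\chi_Q$ at height $\Lambda:=2^{n}L\,a(Q)$: this yields pairwise disjoint dyadic subcubes $\{Q_j\}$ of $Q$ with $\Lambda<\dashint_{Q_j}|f-f_Q|\le2^{n}\Lambda$, with $|f-f_Q|\le\Lambda$ a.e.\ off $\bigcup_jQ_j$, and --- by \eqref{startingpoint} --- with $\sum_j|Q_j|\le|Q|\,a(Q)/\Lambda\le|Q|/L$, so that $\{Q_j\}\in S(L,Q)$. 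Splitting $\int_Q|f-f_Q|^qw$ over $Q\setminus\bigcup_jQ_j$ and over the $Q_j$, using $|f-f_Q|\le|f-f_{Q_j}|+2^n\Lambda$ on $Q_j$, and then Minkowski's inequality for $q\ge1$ (for $0<q<1$ one uses $(a+b)^q\le a^q+b^q$, at the cost of a constant also depending on $q$) together with $w(E)\le w_r(E)$, one obtains
\[
\Big(\int_{Q}|f-f_Q|^q w\Big)^{\!1/q}\le C_n\Lambda\,w_r(Q)^{1/q}+\Big(\sum_j\int_{Q_j}|f-f_{Q_j}|^q w\Big)^{\!1/q}+C_n\Lambda\,w_r\Big(\bigcup_jQ_j\Big)^{\!1/q}.
\]
Now \eqref{w_r1} gives $w_r(\bigcup_jQ_j)\le L^{-1/r'}w_r(Q)$, while the definition of $\mathcal N$ and the $SD^s_q(w_r)$ condition applied to the $L$-small family $\{Q_j\}$ give $\sum_j\int_{Q_j}|f-f_{Q_j}|^qw\le\mathcal N^q\sum_ja(Q_j)^qw_r(Q_j)\le\mathcal N^q\|a\|^qL^{-q/s}a(Q)^qw_r(Q)$. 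Since $\Lambda\le C_nL\,a(Q)$ and $L^{1-1/(qr')}\le L$, dividing by $a(Q)w_r(Q)^{1/q}$ and taking the supremum over $Q$ produces the self-improving inequality
\[
\mathcal N\ \le\ C_nL+\|a\|\,L^{-1/s}\,\mathcal N\qquad\text{for every }L>1.
\]

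Because $\mathcal N<\infty$, this yields $\mathcal N\le C_nL/(1-\|a\|L^{-1/s})$ for every $L$ with $\|a\|L^{-1/s}<1$, and optimising in $L$ finishes the proof: the minimiser is $L=\big((1+\tfrac1s)\|a\|\big)^{s}$, at which $1-\|a\|L^{-1/s}=\tfrac1{s+1}$, giving $\mathcal N\le C_n(s+1)(1+\tfrac1s)^{s}\|a\|^{s}\le C_ns\|a\|^{s}$, which is \eqref{eq:First main estimate}. (If $\|a\|<1$, so that the minimiser drops below $1$, one takes $L$ close to $1$ instead, or reduces to the case $\|a\|\ge1$.) The Calder\'on--Zygmund iteration and the optimisation in $L$ are routine; as noted, the only genuine difficulty is securing the a priori finiteness of $\mathcal N$ without assuming $w\in A_\infty$, and this is precisely what the reverse-H\"older-type property \eqref{w_r1} of the functional $w_r$ delivers.
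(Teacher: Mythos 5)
Your proof is correct and follows essentially the same route as the paper's: a Calder\'on--Zygmund stopping-time decomposition of $f-f_Q$, substitution of the set functional $w_r$ for $w$ so that \eqref{w_r1}--\eqref{w_r2} replace the role of the $A_\infty$ property \eqref{Ainfty}, an absorption argument on the a priori quantity, and a perturbation $a\mapsto a+\varepsilon$ to secure its finiteness. Two small points are worth recording. First, you define the a priori quantity $\mathcal{N}$ as a supremum over dyadic subcubes of a fixed $Q_0$ rather than over all cubes of $\mathbb{R}^n$; since the stopping cubes are dyadic subcubes of $Q$, this is all that is needed and is slightly cleaner. Second, and more substantively, you truncate $f$ to $f_N$ before perturbing $a$. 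The paper bounds its a priori quantity by $\|f-f_Q\|_{L^\infty}/\varepsilon$, which is not finite for a merely locally integrable $f$; your truncation step (using that truncation is $1$-Lipschitz, so \eqref{startingpoint} is preserved up to a factor $2$) repairs this and is the right way to run the argument. One clarification you should make explicit: the truncation--perturbation step only shows $\mathcal{N}<\infty$, since the smallness condition for $a_\varepsilon$ holds with the worse pair $(\tilde{s},\max\{\|a\|,1\})$, $\tilde s=\max\{s,qr'\}$; once finiteness is in hand, you must run the absorption argument once more with the original $(s,\|a\|)$ to obtain the stated constant $C_n\,s\,\|a\|^s$ rather than $C_n\,\tilde s\,\max\{\|a\|,1\}^{\tilde s}$. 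This is exactly what the paper does, so it is not a different approach, merely a step to spell out.
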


One should also observe that what we obtain is not a bound for the $L^q(w)$ average of the oscillation of $f$ over $Q$, as we get $w_r(Q)$ instead of $w(Q)$ in the denominator.  However,  we emphasize that the result holds for any weight $w$.  The fact that we do not get an $L^q(w)$ average will not   cause any problem for our applications as, for the functionals $a$ we are going to consider,  the appearance of the quantity $w_r(Q)$  balances the condition. We will see the details of this in the following section and the rest of this section will be devoted to the proof of Theorem \ref{thm:Lp(w)-a(Q)-clean-sht-euclidean}. 
We also emphasize the fact that the case $q$ below $1$ is also included,  and the same can be done in Theorem \ref{PRimprovement}.

\begin{proof}[Proof of Theorem \ref{thm:Lp(w)-a(Q)-clean-sht-euclidean}]
Take a cube $Q$. By the hypothesis, we know that
\[\dashint_Q|f-f_Q|dx\leq a(Q),\]
which can be reformulated as 
\[\dashint_Q\frac{|f-f_Q|}{a(Q)}dx\leq 1.\]
We consider $L>1$ to be chosen later.  Let us perform the standard local Calder\'on-Zygmund decomposition (see \cite{Calderon1952}) for $\frac{f-f_Q}{a(Q)}$ at level $L$. This yields a collection $\{Q_j\}_{j\in\mathbb{N}}$ in the family $\mathcal{D}(Q)$ of dyadic subcubes of $Q$, maximal with respect to inclusion, satisfying 
\[
L\leq \dashint_{Q_j}\frac{|f-f_Q|}{a(Q)}dx\leq 2^n L,
\]
where the second inequality follows by maximality. As in \cite{Perez2018} note that
\[
\Omega_L:=\left\{x\in Q:M_Q^d\left(\frac{f-f_Q}{a(Q)}\chi_Q\right)(x)>L\right\}=\mathring\bigcup_{j\in\mathbb{N}} Q_j,
\]
where $M_Q^d$ is the localized dyadic maximal function asociated to the cube $Q$, \emph{i.e.}
\[
M_Q^d g(x):=\sup_{\underset{P\in \mathcal{D}(Q)}{x\in P\subset Q}}\dashint_P|g|dx.
\]
Then, by the Lebesgue differentiation theorem it follows that
\begin{equation}\label{LDT}
\frac{|f(x)-f_Q|}{a(Q)}\leq L,\qquad\text{a.e. }x\notin\Omega_L.
\end{equation}

We now decompose $\frac{f-f_Q}{a(Q)}$ as follows
\begin{equation*}\label{decomposition}
\begin{aligned}
\frac{f-f_Q}{a(Q)}&= \frac{f-f_Q}{a(Q)}\chi_{Q\backslash\Omega_L}+\frac{f-f_Q}{a(Q)}\chi_{\Omega_L}=\frac{f-f_Q}{a(Q)}\chi_{Q\backslash\Omega_L}+\sum_{j\in\mathbb{N}}\frac{f-f_Q}{a(Q)}\chi_{Q_j}\\
&=\frac{f-f_Q}{a(Q)}\chi_{Q\backslash\Omega_L}+\sum_{j\in\mathbb{N}}\frac{f-f_{Q_j}}{a(Q)}\chi_{Q_j}+
\sum_{j\in\mathbb{N}}\frac{f_{Q_j} -f_Q}{a(Q)}\chi_{Q_j}
\\
&=\left[\frac{f-f_Q}{a(Q)}\chi_{Q\backslash\Omega_L}+\sum_{j\in\mathbb{N}}\frac{ f_{Q_j} -f_Q}{a(Q)}\chi_{Q_j}\right]+
\sum_{j\in\mathbb{N}}\frac{f-f_{Q_j} }{a(Q)}\chi_{Q_j},
\end{aligned}
\end{equation*}
 and note that, by the properties of the cubes $Q_j$ and \eqref{LDT}, we have that the bracket above is bounded by $2^nL$.

Now we start with the estimation of the desired $L^q$ norm.  Let us take first $q\geq 1$. Consider on $Q$ the measure $\nu$ defined by $d\nu=\frac{w\chi_Q dx}{w_r(Q)}$. Then, by the triangle inequality, we get
\begin{equation}\label{estimacion_norma}
\left(\frac{1}{w_r(Q)}\int_Q \frac{|f-f_Q|^q}{a(Q)^q}wdx\right)^{1/q} \leq   2^nL+\left(\frac{1}{w_r(Q)}\int_{\Omega_L}\left|\sum_{j\in\mathbb{N}}\frac{f-f_{Q_j} }{a(Q)}\chi_{Q_j}\right|^qwdx\right)^{1/q},
\end{equation}
where we used that $w(Q)\leq w_r(Q)$ and also the bound for the bracket in the above decomposition of $\frac{f-f_Q}{a(Q)}$. Now observe that thanks to the disjointness of the cubes in the Calder\'on-Zygmund decomposition, we can plug the power $q$ inside the sum in the second term above. Thus, this term can be bounded as follows
\[
\begin{split}
\int_{\Omega_L}\left|\sum_{j\in\mathbb{N}}\frac{f-f_{Q_j} }{a(Q)}\chi_{Q_j}\right|^qwdx&= \sum_{j\in\mathbb{N}}\int_{Q_j}\left|\frac{f-f_{Q_j} }{a(Q)} \right|^qwdx\\
&=\frac{1}{a(Q)^q}\sum_{j\in\mathbb{N}}\frac{a(Q_j)^qw_r(Q_j)}{w_r(Q_j)}\int_{Q_j}\left|\frac{f-f_{Q_j}}{a(Q_j)}\right|^qwdx\\
&\leq \frac{X^q}{a(Q)^q}\sum_{j\in\mathbb{N}}a(Q_j)^qw_r(Q_j),
\end{split}
\]
where $X$ is the quantity defined by
\begin{equation}\label{eq:a priori estimate}
X:=\sup_{P }\left(\frac{1}{w_r(P)}\int_P\left|\frac{f-f_P}{a(P)}\right|^qwdx\right)^{1/q},
\end{equation}
where the supremum is taken among all cubes in $\mathbb{R}^n$  which we assume is finite for the time being.

By the defining property for the selected cubes $Q_j$, $j\in\mathbb{N}$, we know that $\{Q_j\}_{j\in\mathbb{N}}\in S(L,Q)$, since 
\[
\begin{split}
\sum_{j\in\mathbb{N}} |Q_j|&\leq \frac{1}{L}\sum_{j\in\mathbb{N}}\int_{Q_j}\frac{|f-f_Q|}{a(Q)}dx=\frac{1}{L}\int_{\mathring\bigcup_{j\in\mathbb{N}}Q_j}\frac{|f-f_Q|}{a(Q)}dx\\
&\leq \frac{|Q|}{L}\dashint_Q\frac{|f-f_Q|}{a(Q)}dx\leq \frac{|Q|}{L}.
\end{split}
\]

Then, by the $SD_r^s(w)$ condition, we obtain that
\[
\begin{split}
\left(\frac{1}{w_r(Q)}\int_Q \frac{|f-f_Q|^q}{a(Q)^q}wdx\right)^{1/q}&\leq 2^nL+X\left(\frac{\sum_{j\in\mathbb{N}}a(Q_j)^qw_r(Q_j)}{a(Q)^qw_r(Q)}\right)^{1/q}\\
&\leq 2^nL+X\frac{\|a\|}{L^{1/s}}.
\end{split}
\]
Now we take supremum on the left-hand side and choose $L=2e\max\{\|a\|^s,1\}$, so, by an absortion argument, the above inequality becomes
\[
X\leq 2^{n+1}e\|a\|^s\left((2e)^{1/s}\right)'\leq 2^{n+1}e\|a\|^ss.
\]
This  implies  the desired inequality.

Observe that the same argument can be performed in the case $q<1$ by considering the $L^q$ norm to the power $q$ in \eqref{estimacion_norma}  and using the triangle inequality inside the integral.

To perform the absortion argument above, we need $X$ to be finite, but this  is ensured by the properties $w_r(E)\leq \left(\frac{|E|}{|Q|}\right)^{1/r'}w_r(Q)$ and $\sum_{j}w_r(Q_j)\leq w_r(Q)$ whenever $E\subset Q$ and $\bigcup_j Q_j=Q$, the sets $Q_j$ being pairwise disjoint. These properties are used along with a perturbation argument which leds us to work with $a_\varepsilon(Q)=a(Q)+\varepsilon$ instead of with $a(Q)$.

Indeed, consider a cube $Q$ and a family $\{Q_j\}\in S(L,Q)$. Then there exist constants $C$ and $\tilde{s}$ larger than $\|a\|$ and $s$ such that
\[\sum_ja_\varepsilon(Q_j)^qw_r(Q_j)\leq C^q\left(\frac{1}{L}\right)^{q/\tilde{s}}a_\varepsilon(Q)^qw_r(Q).  \]
This follows from   the Minkowiski's inequality in the case $q\geq 1$:
\[
\begin{split}\left(\sum_ja_\varepsilon(Q_j)^qw_r(Q_j)\right)^{1/q}&=\left(\sum_j(a(Q_j)+\varepsilon)^qw_r(Q_j)\right)^{1/q}\\
&\leq  \left( \sum_ja(Q_j)^qw_r(Q_j)\right)^{1/q}+\left( \sum_j\varepsilon^qw_r(Q_j)\right)^{1/q}\\
& \leq \frac{\|a\|}{L^{1/s}}a(Q)w_r(Q)^{1/q}+\varepsilon w_r\left(\bigcup_jQ_j\right)^{1/q},
\end{split}
 \]
where we should write   $2^{\frac1q-1}$ as a factor in the second and third lines whenever $q<1$. 
 
Now we use the properties of $w_r$   to obtain
\[
 w_r\left(\bigcup_jQ_j\right)\leq \left(\frac{|\bigcup_j Q_j|}{|Q|}\right)^{1/r'}w_r(Q)\leq \frac{1}{L^{1/r'}}w_r(Q),
\]
since $\{Q_j\}\in S(L,Q)$.

Thus,   if $q>1$  
\[
\begin{split}\left(\sum_ja_\varepsilon(Q_j)^qw_r(Q_j)\right)^{1/q}& \leq \frac{\|a\|}{L^{1/s}}a(Q)w_r(Q)^{1/q}+\frac{\varepsilon}{L^{1/qr'}} w_r\left(Q\right)^{1/q}\\
&\leq \max\left\{ \frac{\|a\|}{L^{1/s}},\frac{\varepsilon}{L^{1/qr'}} \right\}a_\varepsilon(Q)w_r(Q)^{1/q}\\
&\leq \frac{\max\{\|a\|,1\}}{L^{1/\max\{s,qr'\}}}a_\varepsilon(Q)w_r(Q)^{1/q},
\end{split}
 \]
where again a factor $2^{\frac1q-1}$ should be added in case $q<1$. This just affects to the quantity $\|a\|$ which now becomes  $C=\max\{\|a\|,1\}$ (resp. $C=2^{\frac1q-1}\max\{\|a\|,1\}$ if $q<1$). The new $ s$ is  $\tilde{s}=\max\{s,qr'\}$. 
 
Hence, with the same proof as before, and taking into account that 
\[X_\varepsilon =\sup_{P}\left(\frac{1}{w_r(P)}\int_P\left|\frac{f-f_P}{a_\varepsilon(P)}\right|^qwdx\right)^{1/q}\leq \frac{\|f-f_Q\|_{L^\infty}}{\varepsilon}<\infty,\]
we can run the argument by choosing $L$ large enough independent of $\varepsilon$ to obtain 
$$
X_{\varepsilon}\leq  c_{s,q,r',\|a\|}
$$
for any $\varepsilon$. This yields the finiteness of \eqref{eq:a priori estimate}, which ends the proof of the theorem.

\end{proof}

\section{Some applications of the self-improving result}\label{sec:applications}

This section will be devoted to the obtention of weighted improved Poincar\'e type inequalities as the ones describes in the first two sections.  More precisely, the result we obtain is the following one.
\begin{thm}\label{unified}
Let $s\in(0,1]$ and $0\leq \gamma\leq s$. Consider $1<p\leq q\leq \frac{np}{n-(s-\gamma)p}$. Let $\Omega$ be  a bounded John domain and consider an  increasing function $\phi$  with $\phi(2t)\leq C\phi(t)$ such that $w_\phi\in L^1_{\loc}(\Omega)$.  Let $w$ a doubling weight and $v$ a weight. If $f\in W_{\tau}^{s,p}\left(\Omega\right)$ for $\tau\in(0,1)$ and $(w,v)\in A_{q,p}^{s-\gamma,r}$ for some $r>1$, then 
\[
\inf_{c\in\mathbb{R}}\|f-c\|_{L^q(\Omega, w_\phi w)}\lesssim[f]_{W_{\tau}^{s,p}\left(\Omega,w_{\Phi,\gamma p}v\right)}.
\]
When $s<1$, the right hand side of the inequality above can be replaced by the quantity  $[f]_{W_{\tau}^{s,p}\left(\Omega,v_{\Phi,\gamma p}v \right)}$.

\end{thm}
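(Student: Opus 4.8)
The plan is to pass from the global inequality on $\Omega$ to local inequalities on the cubes of a Boman chain decomposition via Theorem~\ref{Chua2}, and to produce those local inequalities from the self-improving Theorem~\ref{thm:Lp(w)-a(Q)-clean-sht-euclidean}. Since a bounded John domain is a Boman chain domain (as recalled after Lemma~\ref{Whitney}), and since the chain can be taken to consist of Whitney cubes whose fixed dilates $\sigma Q$ still satisfy $\ell(Q)\asymp d(y)$ for $y\in\sigma Q$, an application of Theorem~\ref{Chua2} with the increasing doubling functions $\phi$ and $\widetilde\Phi(t):=t^{\gamma p}\phi(t)^{p/q}$ (so that $w_{\widetilde\Phi}=w_{\Phi,\gamma p}$), with the given doubling weight $w$, with $\nu=v\,dx$, and with $g:=|\nabla_{s,p,\Omega}^{\tau}f|^{1/p}$, reduces the theorem to proving, for every chain cube $Q$,
\[
\|f-f_Q\|_{L^{q}(Q,w)}\lesssim\Bigl(\int_{\sigma Q}|\nabla_{s,p,\Omega}^{\tau}f|(y)\,d(y)^{\gamma p}\,v(y)\,dy\Bigr)^{1/p}.
\]
Indeed, since $w_\phi$ and $\widetilde\Phi\circ d$ are comparable to constants on each Whitney cube, concatenating these estimates along the chain by Theorem~\ref{Chua2} reproduces the weight $w_\phi w$ on the left and $w_{\widetilde\Phi}v=d^{\gamma p}\phi^{p/q}v=w_{\Phi,\gamma p}v$ on the right, that is, $[f]_{W_{\tau}^{s,p}(\Omega,w_{\Phi,\gamma p}v)}$. (When $s=1$ the object $|\nabla_{1,p,\cdot}^{\tau}f|$ is read as $|\nabla f|^{p}$ and one uses the classical Poincar\'e inequality on cubes in place of its fractional analogue below; the rest is unchanged.)

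For the local estimate on a chain cube $Q$ I would fix a dilate $\widehat Q=\sigma Q$ and consider subcubes $P\subset\widehat Q$, on all of which $d(y)\asymp\ell(Q)$. On such a cube one has the weighted local (fractional) Poincar\'e inequality
\[
\dashint_{P}|f-f_{P}|\,dx\;\lesssim\;\ell(P)^{s}\Bigl(\dashint_{P}|\nabla_{s,p,P}^{\tau_1}f|(y)\,v(y)\,dy\Bigr)^{1/p}\Bigl(\dashint_{P}v^{1-p'}\,dy\Bigr)^{1/p'},
\]
proved by a chaining argument internal to the convex set $P$ (reduce to subcubes of side $\asymp\tau_1\ell(P)$, on which the restriction in $|\nabla_{s,p,P}^{\tau_1}f|$ is harmless, combine by telescoping, and apply H\"older's inequality to insert $v$); for $s=1$ it is simply $\dashint_{P}|f-f_{P}|\lesssim\ell(P)\bigl(\dashint_{P}|\nabla f|^{p}v\bigr)^{1/p}\bigl(\dashint_{P}v^{1-p'}\bigr)^{1/p'}$. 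The arithmetic point is that $(w,v)\in A_{q,p}^{s-\gamma,r}(\Omega)$ controls $\ell(P)^{s-\gamma}\bigl(\dashint_{P}v^{1-p'}\bigr)^{1/p'}$ by $[w,v]_{A_{q,p}^{s-\gamma,r}}\,|P|^{1/p}w_r(P)^{-1/q}$, which turns the right-hand side above into a constant times $\ell(P)^{\gamma}w_r(P)^{-1/q}\bigl(\int_{P}|\nabla_{s,p,P}^{\tau_1}f|\,v\bigr)^{1/p}$; this suggests taking
\[
a(P):=C_0\,\ell(P)^{\gamma}\,w_r(P)^{-1/q}\Bigl(\int_{P}|\nabla_{s,p,P}^{\tau_1}f|(y)\,v(y)\,dy\Bigr)^{1/p},
\]
which, for $C_0$ a large enough multiple of $[w,v]_{A_{q,p}^{s-\gamma,r}}$ times the Poincar\'e constant, satisfies the starting point $\dashint_{P}|f-f_{P}|\le a(P)$ of Theorem~\ref{thm:Lp(w)-a(Q)-clean-sht-euclidean} for every $P\subset\widehat Q$.

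It then remains to check that $a\in SD_{q}^{s_0}(w_r)$ for some $s_0>1$. For this I would use that the set function $\mu(P):=\bigl(\int_{P}|\nabla_{s,p,P}^{\tau_1}f|(y)\,v(y)\,dy\bigr)^{q/p}$ is monotone and superadditive on cubes — monotone since enlarging the cube enlarges the domain of integration defining $|\nabla_{s,p,P}^{\tau_1}f|$, superadditive because $q\ge p$ and the subcubes are pairwise disjoint — so that $a(P)^{q}w_r(P)=C_0^{q}\,|P|^{\gamma q/n}\mu(P)$ and, for $\{P_i\}\in S(L,P)$,
\[
\sum_{i}a(P_i)^{q}w_r(P_i)=C_0^{q}\sum_{i}|P_i|^{\gamma q/n}\mu(P_i)\le C_0^{q}\Bigl(\tfrac1L\Bigr)^{\gamma q/n}|P|^{\gamma q/n}\mu(P)=\Bigl(\tfrac1L\Bigr)^{\gamma q/n}a(P)^{q}w_r(P),
\]
using the elementary bound $\sum_{i}|P_i|^{\theta}\mu(P_i)\le\bigl(\sum_{i}|P_i|\bigr)^{\theta}\mu(P)$ (valid for every $\theta>0$ by H\"older's inequality together with monotonicity and superadditivity of $\mu$) with $\theta=\gamma q/n$ and $\sum_{i}|P_i|\le|P|/L$. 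Thus $a\in SD_{q}^{n/\gamma}(w_r)$ with $\|a\|=1$, and Theorem~\ref{thm:Lp(w)-a(Q)-clean-sht-euclidean} gives $\bigl(w_r(Q)^{-1}\int_{Q}|f-f_Q|^{q}w\bigr)^{1/q}\lesssim a(Q)$; multiplying by $w_r(Q)^{1/q}$, using $\ell(Q)^{\gamma}\asymp d(y)^{\gamma}$ on $Q$ to turn $\ell(Q)^{\gamma}\bigl(\int_Q|\nabla_{s,p,Q}^{\tau_1}f|v\bigr)^{1/p}$ back into $\bigl(\int_Q|\nabla_{s,p,Q}^{\tau_1}f|\,d^{\gamma p}v\bigr)^{1/p}$, dominating $|\nabla_{s,p,Q}^{\tau_1}f|(y)\le|\nabla_{s,p,\Omega}^{\tau}f|(y)$ for $y\in Q$ (valid once $\tau_1$ is a small enough dimensional multiple of $\tau$, since $\ell(Q)\asymp d(y)$), and enlarging $Q$ to $\sigma Q$ yields the required local estimate. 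The refinement with $v_{\Phi,\gamma p}$ for $s<1$ is then free, since on $\{|x-y|\le\tau d(y)\}$ one has $d(x)\asymp d(y)$ and hence $v_{\Phi,\gamma p}(x,y)\asymp w_{\Phi,\gamma p}(y)$.

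The step I expect to be the main obstacle is the choice of the functional $a$: it must simultaneously verify the $L^{1}$ starting point and the smallness condition $SD_q^{s_0}(w_r)$ with $s_0$ finite, and these pull in opposite directions — the power of $\ell(P)$ one needs in front of $a$ to produce the decay $(1/L)^{q/s_0}$ is precisely the power that $A_{q,p}^{s-\gamma,r}(\Omega)$ wants to spend in order to make the starting point hold, and the resolution above survives only because the distance weight leaves the extra power $\ell(P)^{\gamma}$. At the genuinely unweighted critical exponent $q=np/(n-sp)$ (that is, $\gamma=0$ with $\phi$ having no decay at $0$) this surviving power disappears, $s_0$ degenerates, and one must instead fall back on the truncation (``weak implies strong'') method, which is available for these functionals; but the improved inequalities that motivate the theorem correspond to $\gamma p>0$ or to a nontrivial $\phi$, where the argument above applies directly.
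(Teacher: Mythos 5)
Your proposal matches the paper's own argument essentially step for step: the same local starting point on cubes obtained by an internal chaining (the Hurri-Syrj\"anen--V\"ah\"akangas construction of Lemma~\ref{HV}) together with H\"older and the $A_{q,p}^{s-\gamma,r}$ condition, the same functional $a(Q)=C\,\ell(Q)^\gamma w_r(Q)^{-1/q}\bigl(\int_Q|\nabla_{s,p,Q}^\tau f|\,v\bigr)^{1/p}$ with the $SD_q^{n/\gamma}(w_r)$ verification via monotonicity and superadditivity of the underlying set function, the application of Theorem~\ref{thm:Lp(w)-a(Q)-clean-sht-euclidean}, and finally the passage from Whitney/Boman cubes (where $\ell(Q)\asymp d(\cdot)$ lets one insert $\phi$ and swap $\ell(Q)^\gamma$ for $d^\gamma$) to the full domain via Theorem~\ref{Chua2}. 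Your closing remark is a sharp and accurate observation: the smallness exponent $s_0=n/\gamma$ degenerates as $\gamma\to 0$, which is a borderline the theorem's stated range $0\le\gamma\le s$ formally admits but which the argument, as written both by you and by the paper, does not actually cover.
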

\begin{proof}
Our results follows from the result in Section \ref{sec:improvement} and the following observation. Let us consider a measure $\nu$, a number $\alpha\in[0,1]$ and a weight $w$ and let us define the functional $a(Q):=\ell(Q)^\alpha\left(\frac{\nu(Q)^{1/p}}{w_t(Q)^{1/q}}\right)$ for any cube $Q$, where $t\geq1$. This functional satisfies the $SD_q^s(w_r)$ for any $t\geq r\geq 1$ with $s=n/\alpha q$.
Indeed, take $\{Q_j\}\in S(L)$ a family of subcubes of a cube $Q$. Then by using H\"older's inequality for $w_t$  and also for the sum with exponent $\frac{n}{q\alpha}>1$ (in case $q\alpha/n\geq1$ the result is immediate)
\[
\begin{split}
\sum_ja(Q)^qw_r(Q)&\leq \sum_j |Q_j|^{\frac{q\alpha}{n}}\nu(Q_j)^{q/p}\leq \left(\sum_j|Q_j|\right)^{\frac{q\alpha}{n}}\left(\sum_j\nu(Q_j)^{\frac{(n/q\alpha)'q}{p}}\right)^{\frac{1}{(n/q\alpha)'}}\\
&\leq \left(\frac{1}{L}\right)^{\frac{q\alpha}{n}}\ell(Q)^{q\alpha }\nu(Q)^{q/p}\leq\left(\frac{1}{L}\right)^{\frac{q\alpha}{n}}a(Q)^qw_r(Q).
\end{split}
\]

In what follows, we are going to get a starting point \eqref{startingpoint} where $a(Q)$ is of the form given above and where $\nu$ and $w$ are defined by means of the weights $w$ and $v$ in condition \eqref{condicion}.

We start by noting that, for any function $f\in W^{1,1} (\Omega)$, we have that, if $Q$ is a cube in $\Omega$, then, by the classical Poincar\'e inequality,
\begin{equation}\label{startingclassical}
\begin{aligned}
\dashint_Q|f(x)-f_Q|dx&\leq C \ell(Q)\dashint_Q|\nabla f(x)|dx =C\ell(Q)^{1-\gamma}\ell(Q)^\gamma \dashint_Q |\nabla f(x)|dx  \\
&\leq C\ell(Q)^{1-\gamma} \left(\dashint_Q v^{1-p'}\right)^{\frac{1}{p'}}\ell(Q)^\gamma\left(\dashint_Q|\nabla f(x)|^pv(x)dx\right)^{\frac{1}{p}}\\
& \leq C[w,v]_{A_{q,p}^{1-\gamma,r}(\Omega)}\ell(Q)^\gamma\left(\frac{1}{w_r(Q)^{\frac{p}{q}}}\int_Q|\nabla f(x)|^pv(x)dx\right)^{\frac{1}{p}},
\end{aligned}
\end{equation}
where we have used H\"older's  inequality and the $A_{q,p}^{1-\gamma,r}(\Omega)$ condition on $w$ and $v$.

Then, we have obtained \eqref{startingpoint}  with the special functional
$$a_{1,p}(Q):= C[w,v]_{A_{q,p}^{1-\gamma,r}(\Omega)}\ell(Q)^\gamma\left(\frac{1}{w_r(Q)^{\frac{p}{q}}}\int_Q|\nabla f(x)|^pv(x)dx\right)^{\frac{1}{p}}$$
for any function $f\in W^{1,1}(\Omega)$.  This functional will satisfy the smallnes condition $SD_q^{\frac{n}{\gamma q}}(w_r)$ condition as long as $|\nabla f|\in L_{\loc}^p(\Omega,v)$. On the other hand, if it does not satisfy this condition, then there is nothing to prove, as the right-hand side of the inequalities under consideration will be infinite. This starting point will alow us to obtain a weighted improved classical Poincar\'e-Sobolev inequality.

Now, we will get an starting point \eqref{startingpoint} which allows us to obtain a weighted improved fractional Poincar\'e-Sobolev inequality. Once we get this starting point, we will be able to obtain our main result with a unified approach by applying the self-improving result we proved in Section \ref{sec:improvement} and a modified version of the standard chaining argument which we stated in Theorem \ref{Chua2}.

Let us consider a sufficiently regular function $f$ so that the following computations make sense.  We will be using the following construction which  can be found in \cite{Hurri-Syrjaenen2013}. 

\begin{letterlemma}\label{HV}
For any cube $Q$ in a domain  $\Omega\subset\mathbb{R}^n$ and $0<\tau<1$, we can define a family $\mathcal{Q}$ of subcubes of $Q$ with the following properties:
\begin{enumerate}
\item The size of every cube in $\mathcal{Q}$ is comparable to that of $Q$.
\item If $Q_1,Q_2\in\mathcal{Q}$ share a common face, then the set $R=Q_1\cup Q_2$ is a set  of size comparable to that of $Q$ which satisfies that $R\subset B(y,\tau \ell(Q))$ for every $y\in R$.
\end{enumerate}
Observe that, given $x,y\in R$, one has $B(y,d(x,y))\subset CQ_1\cup CQ_2$ for some $C\geq1$. This family of subcubes is uniformly finite for every cube $Q$.
\end{letterlemma}

For any of such sets $R$ one has, by convexity, the following:  \begin{equation}\label{starting1}
\begin{aligned}
\dashint_R|f-f_R|&\leq \dashint_R\dashint_R|f(x)-f(y)|dxdy \leq \dashint_R\dashint_R|f(x)-f(y)| dx\,v(y)^{\frac1p-\frac1p}dy\\
&\leq \left(\dashint_R\dashint_R|f(x)-f(y)|^pdx\, v(y)dy\right)^{1/p}\left(\dashint_Rv^{1-p'}\right)^{1/p'}\\
&\leq \left(\dashint_R\int_{R}\frac{|f(x)-f(y)|^p}{|x-y|^{n}}dx\, v(y)dy\right)^{1/p}\left(\dashint_Rv^{1-p'}\right)^{1/p'}\\
&\leq \ell(Q)^\gamma \ell(Q)^{s-\gamma}\left(\dashint_R     |\nabla_{s,p,Q}^\tau f|(y)^p       v(y)dy\right)^{1/p}\left(\dashint_Rv^{1-p'}\right)^{1/p'} \\
&\asymp\frac{\ell(Q)^\gamma \ell(Q)^{s-\gamma}}{|Q|^{1/p}}\left(\int_R     |\nabla_{s,p,Q}^\tau f|(y)^p       v(y)dy\right)^{1/p}\left(\dashint_Rv^{1-p'}\right)^{1/p'}\\
&\leq\frac{[w,v]_{A_{q,p}^{s-\gamma,r}(\Omega)}\ell(Q)^\gamma}{w_r(Q)^\frac{1}{q}}\left(\int_Q   |\nabla_{s,p,Q}^\tau f|(y)^p       v(y)dy\right)^{1/p},
\end{aligned}
\end{equation}
where we have assumed $(w,v)\in A_{q,p}^{s-\gamma,r}(\Omega)$ and $|\nabla_{s,p,Q}^\tau f|$ is the function already defined in \eqref{def:fractional_derivative} by
\begin{equation*}
|\nabla_{s,p,Q}^\tau f|:=\int_{Q\cap B(y,\tau\ell(Q))}\frac{|f(x)-f(y)|^p}{|x-y|^{n+sp}}dx\chi_Q,\qquad 0<s<1.
\end{equation*}

Summarizing, we obtained
\begin{equation}\label{starting2}
\begin{aligned}
\dashint_R&|f-f_R|\leq \frac{[w,v]_{A_{q,p}^{s-\gamma,r}(\Omega)}\ell(Q)^\gamma}{|Q|^{1/q}\left(\dashint_Qw^r\right)^{1/qr}}\left(\int_Q   |\nabla_{s,p,Q}^\tau f|(y)^p       v(y)dy\right)^{1/p}.
\end{aligned}
\end{equation}

We just have to argue as in \cite[Lemma 2.2]{Hurri-Syrjaenen2013} in order to get (by the above and the  doubling metric property of $\mathbb{R}^n$)  that for any cube $Q\subset \Omega$
\begin{equation}\label{startingfractional}
\begin{aligned}
\dashint_Q&|f-f_Q| \leq \frac{C_n[w,v]_{A_{q,p}^{s-\gamma,r}(\Omega)}\ell(Q)^\gamma}{|Q|^{1/q}\left(\dashint_Qw^r\right)^{1/qr}}\left(\int_Q   |\nabla_{s,p,Q}^\tau f|(y)^p       v(y)dy\right)^{1/p}.
\end{aligned}
\end{equation}
Observe that the right-hand side defines, for  cubes $Q\subset \Omega$, a functional of the form $a_s(Q)=\ell(Q)^\alpha\frac{\nu(Q)^{1/p}}{w_r(Q)^{1/q}}$ with the weight $w$, $\alpha=\gamma$  and $\nu(Q)=\int_Q|\nabla_{s,p,Q}^\tau f|(y)^p v(y)$. 

The assumptions we need on $f$ are those which ensure the $L^p(Q,v)$ integrability of this $|\nabla_{s,p,Q}^\tau f|$  (in order for $\nu$ to be finite on every cube). Note that also in this case, if this integrability does not hold, then the result we want to prove is trivial, as the right-hand side is infinite.

At this point, we are ready to perform our argument. Once we got the starting points \eqref{startingclassical} and \eqref{startingfractional},   we will apply Theorem \ref{thm:Lp(w)-a(Q)-clean-sht-euclidean} to the corresponding functionals 
\begin{equation}\label{funcionales}
a_{s,p}(Q):=\frac{[w,v]_{A_{q,p}^{s-\gamma,r}(\Omega)}\ell(Q)^\gamma}{w_r(Q)^\frac{1}{q}}\left(\int_Q|\nabla_{s,p,Q}^\tau f|(y)^p v(y) dy\right)^{1/p},\qquad s\in(0,1],
\end{equation}
where by an abuse of notation we will write  $|\nabla_{1,p,\Omega}^\tau f|:=|\nabla f|$   for any $p$. By doing this, we get, for any $s\in(0,1]$,
\[
\left(\frac{1}{w_r(Q)}\int_Q |f(x)-f_Q|^qw(x)dx\right)^{\frac{1}{q}}\leq C_{n,s,\alpha}a_{s,p}(Q),\qquad Q\subset \Omega,
\]
that is,
\[
\left(\int_Q |f(x)-f_Q|^qw(x)dx\right)^{\frac{1}{q}}\leq C[w,v]_{A_{q,p}^{s-\gamma,r}(\Omega)}\ell(Q)^\gamma\left(\int_Q |\nabla_{s,p,Q}^\tau f|(x)^p v(x)dx\right)^{1/p},
\]
for any $Q\subset\Omega$, where $C:= C_{n,s,\gamma}$. 

Once we have an estimate for any cube inside $\Omega$, we will focus on Boman chains of cubes of $\Omega$. As commented in Section  \ref{sec:preliminaries}, these cubes $W$ can be assumed to satisfy the Whitney property $d(x)\asymp \ell(W)$  for any $x\in W$. This will allow to replace the sidelenght of the cube in the estimate above by the distance to the boundary, and then, by multiplying both sides of the inequality by $\phi(\ell(Q))$, for $\phi$ a positive increasing function satisfying $\phi(2t)\leq C\phi(t)$, we obtain the following estimate on for any cube $W$  from a Boman chain
\[
\left(\int_W |f(x)-f_Q|^qw_\phi(x)w(x)dx\right)^{\frac{1}{q}}\leq C_{n,s,\gamma}\left(\int_W |\nabla_{s,p,\Omega}^\tau f|(x)^pw_{\Phi,\gamma p}(x)v(x)dx\right)^{1/p},
\]
where we recall that $w_\phi(x)=\phi(d(x))$  and $w_{\Phi,\gamma p}=d(x)^{\gamma p}w_\phi(x)^{\frac{p}{q}}$.

We can now apply  Theorem \ref{Chua2}.   Note that we only need to assume $w$ to be doubling as, in the argument in the proof of the chaining result (which we will outline in the following), we can replace  the improving weight $\phi(d(x))$ in the left-hand side by the sidelenght of each Whitney cube in the Boman chain of $\Omega$. This allows us to perform the argument in \cite{Chua1993} with the weight $w$ (that needs to be doubling\footnote{We would like to point out that the only step where the doubling property of the weight  is used is in the adapted chaining argument Theorem \ref{Chua2} which is just a modification of \cite[Lemma 2.8]{Chua1993}. In recent personal communications with the author of that work, we have discovered the existence of his new work \cite{Chua2018}, where he proves a quite general version of the chaining result which allows to obtain (from a starting inequality on balls) a Poincar\'e inequality in the whole domain just by asking $w$ to satisfy a somehow weak doubling property on certain balls. Our result is probably partially contained in his result once one has our starting points, and thus this shows that a stronger version of Theorem \ref{unified} could be obtained by considering this improved chaining result, avoiding this way the doubling condition on $w$.}) and then to recover the improving weight almost at the end of the proof. By doing this, we obtain the desired inequality from the inequality above, namely
\begin{equation}\label{Poindomain}
\inf_{c\in \mathbb{R}}\|f-c\|_{L^{q}(\Omega,w_\phi dw )}\lesssim [f]_{W_{\tau}^{s,p}\left(\Omega,w_{\Phi,\gamma p}dv \right)}.
\end{equation}
Note that, in the case $s<1$, the one-variable weight $w_{\Phi,\gamma p}$ can be replaced by the two-variables weight $v_{\Phi,\gamma p}(z,y)=\min_{x\in\{z,y\}}w_{\Phi,\gamma p}(x)$.
\end{proof}

\begin{rem}

 It should be noted that our result does not improve the main result in \cite{Drelichman2008} in the non-fractional case.  On one hand, if we do not want to ask $w$ to satisfy the $A_\infty$ condition, then  $w$ is somehow forced to satisfy (together with $v$), the condition $(w,v)\in A_{q,p}^{s-\gamma, r}$, for some number  $r$ strictly larger than $1$, in contrast with the result in \cite{Drelichman2008}, where the authors are able to consider the case in which $r=1$. Observe that the case $p=q$ in \cite{Drelichman2008} is improved by our result since we are able to take $r=1$ in the right-hand side integral in \eqref{DDp=q} and also we do not have to ask for any further condition on $v$. Note that, in our setting, the doubling condition on $w$ implies the reverse doubling condition. On the other hand, if we want to take $r$ to be $1$ in \eqref{condicion}, we so far have to ask $w$ to be in $A_\infty$, instead of asking for the reverse doubling property only, as they do in \cite{Drelichman2008}.  Finally we note that in contrast with the result in \cite{Drelichman2008}, we are able to plug more improving weights at both sides of our inequalities. 
\end{rem}
\begin{rem}
We now turn our attention  to the main result in \cite{Cejas2019}. First, we note that our result does not contain improving weights of the form $w_\phi^F(x)=\phi(d_F(x))$, where $d_F(x)=\inf_{y\in F}|x-y|$ for a compact subset $F\subsetneq\partial\Omega$. Also, if we want $w$ to not necessarily be in $A_\infty$, then we are somehow forced to work in the Euclidean space, as we do not know a more abstract counterpart of Theorem \ref{PRimprovement}.  Hence the comments we will give in the following will be enframed in the Euclidean setting. Even if we are not able to obtain this improving weights of the form $w_\phi^F$ depicted  above, we are able to obtain a quite large class of improving weights for which a \emph{weighted} improved fractional Poincar\'e inequality holds. Thus we extend the main result in \cite{Cejas2019} by adding weights to the final result.  
 \end{rem}

\section{Results in metric spaces for $A_\infty$ weights}\label{sec:metric}

As we know that Theorem \ref{weakimproving} is true for any space of homogeneous type, we can think of a generalization of Theorem \ref{unified} to this more general context (or at least to the context of doubling measure metric spaces). In order to do this, we have to redefine all the concepts we have worked with in the more general setting we are attempting to work in. If we succed in doing this, we will get a full generalization of the main result in \cite{Cejas2019} (up to the consideration of the fact the improving weights which include the parameter $F\subsetneq\partial\Omega$ are not included in our result) and the main result in \cite{Drelichman2008} (up to the fact that we will be asking $w$ to be in $A_\infty$). Our results will be based on Theorem \ref{weakimproving} and also in the ``weak implies strong'' argument we have mentioned above. Observe that this was not needed in the Euclidean case.

The fact that we are working on cubes of the Euclidean space is not fundamental except (to the best of our knowledge) for Theorem \ref{thm:Lp(w)-a(Q)-clean-sht-euclidean}.  Moreover, we know that similar (unweighted) results to Theorem \ref{unified} for the case $s<1$ make sense in the general setting of metric spaces with a doubling measure, as one can check in \cite{Cejas2019}. Even more, the result would make sense for a space of homogeneous type, that is, a space $(X,d,\mu)$, where $d$ is a quasimetric and $\mu$ is a doubling measure. Recall that a quasimetric $d$ on a set $X$ is a nonnegative function defined on $X\times X$ which satisfies 
\begin{enumerate}
\item $d(x,y)=0$ if and only if $x=y$;
\item $d(x,y)=d(y,x)$ for all $x,y\in X$;
\item There exists a finite constant $K\geq 1$ such that 
\[d(x,y)\leq K[d(x,z)+d(z,y)],\quad x,y,z\in X.\]
\end{enumerate}

Observe that the doubling property of $\mu$ gives (see \cite{Coifman1971}) that  $(X,d)$  has the following (geometric) doubling property:
There exists a positive integer $N\in \mathbb{N}$ such that, for every point $x\in X$ and for every $r>0$, the ball $B(x,r):=\{y\in X:d(x,y)<r\}$ can be covered by at most $N$ balls $B(x_i,r/2)$. Balls in this context are not necessarily open sets.

In this case, $W_{\tau}^{s,p}(\Omega,d\mu)$ and the seminorm $[f]_{W_{\tau}^{s,p}(\Omega,w_{\Phi,\gamma p}
v)}$, $0<s\leq1$, are defined in an analogous way to the Euclidean case by the (fractional) derivatives  
\begin{equation}\label{fractionalderSHT}|\nabla_{s,p,B}^\tau f|(y)
=\left(\int_{B^*\cap B(y,\tau r(B^*))}\frac{|f(x)-f(y)|^p }{\mu[B(y,d(x,y))]d(x,y)^{sp}}d\mu(x)\right)^{1/p}\chi_{\Omega}(y),
\end{equation}
where, as in the proof of Theorem \ref{unified}, by an abuse of notation  $|\nabla_{1,p,B}^\tau f| $ will be defined to be (for every $p$) the corresponding gradient in our context. For this we mean any function  $g$ with the truncation property (see \cite{Hajlasz2000} for details on the truncation property)  satisfying the $(1,1)$-Poincar\'e  inequality (as defined in \cite{Hajlasz2000}), i.e.
\begin{equation}\label{Poin11}
\dashint_B |f-f_B|d\mu\leq C r(B)\dashint_{\lambda B} gd\mu
\end{equation}
for any ball $B$ such that $\lambda B\in \Omega$, where $f\in L^1_{\loc}(X)$ and $g\in L^1(X)$, and $\lambda\geq1$, $C>0$ are fixed constants. In the literature, it is usual to consider $g$ to be an upper gradient of $f$. See \cite{Heinonen2015,Hajlasz2000} for  good references about Poincar\'e inequalities in metric spaces based on the use of upper gradients.

\begin{thm}\label{unifiedSHT}
Let $s\in(0,1]$ and $0\leq \gamma\leq s$. Let $(X,d,\mu)$ a metric space endowed with a doubling measure $\mu$ with doubling dimension $n_\mu$. Consider $1\leq p\leq q\leq \frac{n_\mu p}{n_\mu-(s-\gamma)p}$. Let $\Omega$ be  a bounded John domain and consider an  increasing function $\phi$  with $\phi(2t)\leq C\phi(t)$ such that $w_\phi\in L^1_{\loc}(\Omega)$.  Let $w$ a doubling weight and $v$ a weight. If $f\in W_{\tau}^{s,p}\left(\Omega,d\mu \right)$ for $\tau\in(0,1)$ and $(w,v)\in A_{q,p}^{s-\gamma,r}$ for some $r>1$, then 
\[
\inf_{c\in\mathbb{R}}\|f-c\|_{L^q(\Omega, w_\phi w)}\lesssim[f]_{W_{\tau}^{s,p}\left(\Omega,w_{\Phi,\gamma p}v\right)}.
\]
When $s<1$, the right hand side of the inequality above can be replaced by the quantity  $[f]_{W_{\tau}^{s,p}\left(\Omega,v_{\Phi,\gamma p}vd\mu \right)}$.

\end{thm}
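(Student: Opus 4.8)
The plan is to follow, step by step, the architecture of the proof of Theorem~\ref{unified}, substituting for each Euclidean ingredient its counterpart in a space of homogeneous type. There are three stages: first, produce a \emph{starting point} $\dashint_B|f-f_B|\,d\mu\le a(B)$ on balls, with $a$ of the model shape $a(B)=r(B)^{\gamma}\bigl(\nu(B)/w_r(B)^{p/q}\bigr)^{1/p}$, where $\nu(B)=\int_{\sigma B}|\nabla^{\tau}_{s,p,\Omega}f|^{p}\,v\,d\mu$ (and $\nu(B)=\int_{\sigma B}|\nabla f|^{p}v\,d\mu$ when $s=1$) and $\sigma\ge 1$ is a fixed dilation; second, self-improve this estimate \emph{on balls} to a weighted $L^q$ estimate; third, globalize via the chaining Theorem~\ref{Chua}, in the form of Theorem~\ref{Chua2}, so as to carry the improving weights along. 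The one genuinely new feature is the second stage: in the metric setting we have no analogue of the ``any weight'' Theorem~\ref{thm:Lp(w)-a(Q)-clean-sht-euclidean}, so we must instead invoke Theorem~\ref{weakimproving}, which applies only to $w\in A_\infty(\mu)$ --- this is why the operative hypothesis on $w$ here is $A_\infty(\mu)$ rather than merely doubling. The restriction is benign: an $A_\infty(\mu)$ weight is doubling (so Theorem~\ref{Chua2} applies) and lies in a reverse H\"older class $\RH_{r_w}$, whence $w_r(B)\asymp w(B)$ for every $1\le r\le r_w$, so the $w_r$ appearing in $a$ may be traded for $w$ wherever convenient.

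For the first stage, when $s=1$ I would start from the $(1,1)$-Poincar\'e inequality \eqref{Poin11} for the generalized gradient $g=|\nabla^{\tau}_{1,p,\Omega}f|$, insert $r(B)^{1-\gamma}$, apply H\"older's inequality in $L^{p}(\sigma B,v\,d\mu)$ and $L^{p'}(\sigma B,v^{1-p'}d\mu)$ (with the obvious modification when $p=1$), and invoke the $A^{1-\gamma,r}_{q,p}(\Omega)$ condition, exactly as in \eqref{startingclassical}. When $s<1$ I would run the homogeneous-space version of the Hurri-Syrj\"anen construction of Lemma~\ref{HV} (as used in \cite{Cejas2019} and \cite{Hurri-Syrjaenen2013}): decompose $B$ into a uniformly finite family of subballs so that the union $R$ of any face-sharing pair satisfies $R\subset B(y,\tau r(B))$ for all $y\in R$, estimate $\dashint_R|f-f_R|$ by convexity and H\"older as in \eqref{starting1}, and chain these local estimates across $B$ as in \cite[Lemma~2.2]{Hurri-Syrjaenen2013} by means of the geometric doubling property. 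Either way one reaches $\dashint_B|f-f_B|\,d\mu\le C[w,v]_{A^{s-\gamma,r}_{q,p}(\Omega)}\,r(B)^{\gamma}\bigl(w_r(B)^{-p/q}\nu(B)\bigr)^{1/p}=:a(B)$; if the relevant local $L^{p}(v)$-integrability of $|\nabla^{\tau}_{s,p,\Omega}f|$ fails, the right-hand side of the desired inequality is infinite and there is nothing to prove.

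For the second stage, I would check that $a$ satisfies the $D_q(w)$ condition of Definition~\ref{Dr}. Using $w_r\asymp w$, one has $a(B_i)^{q}w(B_i)\asymp r(B_i)^{\gamma q}\nu(B_i)^{q/p}$ for a disjoint subfamily $\{B_i\}$ of a ball $B$; summing, H\"older's inequality (with exponent $n_\mu/(\gamma q)>1$; the case $\gamma q\ge n_\mu$ being immediate), the bound $\sum_i r(B_i)^{n_\mu}\lesssim r(B)^{n_\mu}$ --- which follows from $\mu(B_i)\gtrsim(r(B_i)/r(B))^{n_\mu}\mu(B)$ together with $\sum_i\mu(B_i)\le\mu(B)$ --- the superadditivity $\sum_i\nu(B_i)\le\nu(B)$, and $q\ge p$ give $\sum_i a(B_i)^{q}w(B_i)\lesssim r(B)^{\gamma q}\nu(B)^{q/p}\asymp a(B)^{q}w(B)$. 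Then Theorem~\ref{weakimproving} (with exponent $q$) yields the weak-type bound $\|f-f_B\|_{L^{q,\infty}(B,\,w/w(B))}\lesssim\|a\|\,a(cB)$ for balls $B$ with $cB\subset\Omega$, $c$ a fixed dilation constant; and since the functionals built from $\int|\nabla^{\tau}_{s,p,\Omega}f|^{p}v\,d\mu$ have the truncation property (Maz'ya truncation / ``weak implies strong'', cf. \cite{Dyda2016,Hajlasz2000}), this self-improves to the strong estimate, which after undoing the $w_r\asymp w$ comparison reads $\|f-f_B\|_{L^{q}(B,w)}\lesssim[w,v]_{A^{s-\gamma,r}_{q,p}(\Omega)}\,r(B)^{\gamma}\bigl(\int_{\sigma B}|\nabla^{\tau}_{s,p,\Omega}f|^{p}v\,d\mu\bigr)^{1/p}$, with constant independent of $B$.

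For the third stage, I would restrict to balls $B$ of a Boman chain of $\Omega$, which may be taken to be Whitney balls, so that $r(B)\asymp d(x)$ for $x\in B$; replacing $r(B)^{\gamma}$ by $d(\cdot)^{\gamma}$, multiplying through by $\phi(r(B))$, and using $\phi(2t)\le C\phi(t)$ converts the local estimate into $\|f-f_B\|_{L^{q}(B,\,w_\phi w)}\lesssim\bigl\| |\nabla^{\tau}_{s,p,\Omega}f| \bigr\|_{L^{p}(\sigma B,\,w_{\Phi,\gamma p}v)}$ with $\Phi=\phi^{p/q}$; the chaining Theorem~\ref{Chua2}, applied with base doubling weight $w$, the increasing functions $\phi$ and $t\mapsto t^{\gamma p}\phi(t)^{p/q}$, and the measure $v\,d\mu$ (no doubling needed on $v$), then upgrades this to $\inf_{c\in\mathbb{R}}\|f-c\|_{L^{q}(\Omega,w_\phi w)}\lesssim[f]_{W^{s,p}_\tau(\Omega,w_{\Phi,\gamma p}v)}$, the improving weights being recovered at the end exactly as in \cite{Chua1993} and in the proof of Theorem~\ref{unified}. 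Finally, when $s<1$, on the region $|z-y|\le\tau d(y)$ one has $d(z)\asymp d(y)$, so by the doubling of $\phi$ and of $t\mapsto t^{\gamma p}$ the weight $w_{\Phi,\gamma p}(y)$ is comparable to $v_{\Phi,\gamma p}(z,y)=\min_{x\in\{z,y\}}w_{\Phi,\gamma p}(x)$, giving the stated replacement. I expect the hard part to be the second stage: passing through the \emph{weak}-type inequality to reach the strong $L^q$ bound is unavoidable (it is precisely the absence of a metric analogue of Theorem~\ref{thm:Lp(w)-a(Q)-clean-sht-euclidean}), it is what forces the $A_\infty(\mu)$ hypothesis, and one must take care that truncating $f$ does not spoil the $D_q(w)$ condition --- which is where the specific structure of the functionals $a_{s,p}$ (they only decrease under truncation of $f$) is used. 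A secondary, more routine point is checking that the Lemma~\ref{HV}-type decomposition and the within-ball chaining survive in a general space of homogeneous type, which is by now standard and is carried out in \cite{Cejas2019}.
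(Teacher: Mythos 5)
Your proposal reproduces the paper's three-stage architecture for Theorem \ref{unifiedSHT}: a starting point on balls via the $(1,1)$-Poincar\'e inequality (for $s=1$) or the metric Hurri--Syrj\"anen decomposition from \cite{Cejas2019} (for $s<1$), a weak-type self-improvement via Theorem \ref{weakimproving} followed by the Maz'ya truncation/weak-implies-strong argument, and globalization over a Boman chain of Whitney balls via the metric version of Theorem \ref{Chua2}. You also correctly diagnose that the operative hypothesis is $w\in A_\infty(\mu)$ rather than merely doubling (the paper's own closing remark concedes this), and your explicit verification of the $D_q(w)$ condition for the functional $a$ --- a step the paper leaves implicit when invoking Theorem \ref{weakimproving} --- is a welcome clarification.
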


\begin{proof}
In this setting, we have Theorem \ref{weakimproving}  at hand. Thus, we are left with obtaining suitable starting points. Consider a domain $\Omega$ in $X$. It is not difficult to see that the corresponding nonfractional starting point can be obtained in a similar way to \eqref{startingclassical} for any pair of functions $(f,g)$ satisfying the $(1,1)$-Poincar\'e  inequality (as defined in \cite{Hajlasz2000}), i.e.
\begin{equation}\label{Poin11}
\dashint_B |f-f_B|d\mu\leq C r(B)\dashint_{\lambda B} gd\mu
\end{equation}
for any ball $B$ such that $\lambda B\in \Omega$, where $f\in L^1_{\loc}(X)$ and $g\in L^1(X)$, and $\lambda\geq1$, $C>0$ are fixed constants.

By working as in the Euclidean case, (we are assuming $g$ to satisfy the truncation property) the starting point we get in this case is clearly
\begin{equation}\label{startingclassical1}
\dashint_B|f-f_B|d\mu\leq \frac{C[w,v]_{A_{q,p}^{1-\gamma,1}(\Omega,\mu)}r(B)^\gamma}{\lambda} \left(\frac{1}{w(\lambda B)^{\frac{p}{q}}}\int_{\lambda B} g^p vd\mu\right)^{\frac{1}{p}},
\end{equation}
for any ball $B$ such that $\lambda B\subset \Omega$.

Now we will try to obtain the starting point which corresponds to \eqref{startingfractional}. Let $B$ be a ball in  $(X,d,\mu)$. We will recall here the metric counterpart of Lemma \ref{HV}, which was already introduced in \cite[Lemma 4]{Cejas2019}.  For convenience, let us suppose $d$ to be a metric, so $K=1$. Let $1\leq p<\infty$ and let $s,\tau\in(0,1)$. Let us consider a covering $\mathcal{B}=\{B_i\}_{i\in J}$ of $B$ by $J$ balls of radious $\frac{\tau}{L}r(B)$ for some $L>2$.   
This can be done in such a way that, when $R$ is the union of two balls $B_i$ and $B_j$ with overlapping dilations  (i.e. with  $\lambda B_i\cap \lambda B_j\neq \emptyset$ for some $\lambda>1$ sufficiently small with respect to $L$), $R\subset B(y,\tau r(B))$ for every $y\in R$. Also, such an $R$ satisfies $R\subset B^*$ (for $B^*$ some dilation of $B$ by a factor larger than $2$) and $\mu[B(z,d(z,y))]\lesssim\mu(R)\asymp \mu(B)$ for every pair of points $y$ and $z$ in $R$. Observe that the index set $J$ is uniformly finite for every ball $B$, as $X$ satisfies the geometric doubling property.

Once we have this construction, observe that, for the union $R$ of two balls in $\mathcal{B}$ with overlapping dilations, we have, by the doubling condition and condition \eqref{condicion}

\begin{equation}\label{starting11}
\begin{aligned}
\dashint_R|f-f_R|d\mu&\leq \dashint_R\dashint_R|f(x)-f(y)|d\mu(x)d\mu(y) \\
&\leq \dashint_R\dashint_R|f(x)-f(y)| d\mu(x)v(y)^{\frac{1}{p}-\frac{1}{p}}d\mu(y)\\
&\leq \left(\dashint_R\dashint_R|f(x)-f(y)|^p v(y)d\mu(x)d\mu(y)\right)^{\frac{1}{p}}\left(\dashint_Rv^{1-p'}d\mu\right)^{\frac{1}{p'}}\\
&\leq \left(\dashint_R\int_{R}\frac{|f(x)-f(y)|^p}{\mu[B(y,d(x,y))]} d\mu(x)v(y)d\mu(y)\right)^{\frac{1}{p}}\left(\dashint_Rv^{1-p'}d\mu\right)^{\frac{1}{p'}}\\
&\leq r(B^*)^{s} \left(\dashint_R|\nabla_{s,p,B}^\tau f|(y)^pv(y)d\mu(y) \right)^{\frac{1}{p}}\left(\dashint_Rv^{1-p'}d\mu\right)^{\frac{1}{p'}}\\
&\lesssim\frac{r(B^*)^{s}}{\mu(B^*)^{\frac{1}{p}}}\left(\int_R|\nabla_{s,p,B}^\tau f|(y)^p v(y)d\mu(y)  \right)^{\frac{1}{p}}\left(\dashint_{B^*}v^{1-p'}d\mu\right)^{\frac{1}{p'}}\\
&\leq\frac{[w,v]_{A_{q,p}^{s-\gamma,r}(\Omega,\mu)}r(B^*)^\gamma}{w(B^*)^\frac{1}{q}}\left(\int_{B^*}|\nabla_{s,p,B}^\tau f|(y)^p v(y)d\mu(y)\right)^{\frac{1}{p}}.
\end{aligned}
\end{equation}

With this in mind, observe that, by Minkowski's,
\[
\begin{split}
\frac{1}{\mu(B)}\int_B|f(y)-f_{B}|d\mu(y)&\lesssim \frac{1}{\mu(B)}\int_B |f(y)-f_{B_1}|d\mu(y)\\
&\lesssim \sum_{j\in J}\frac{1}{\mu(B_j)}\int_{B_j}|f(y)-f_{B_j}|d\mu(y)\\
&\qquad+ \sum_{j\in J}\frac{1}{\mu(B_j)}\int_{B_j}|f_{B_j}-f_{B_1}|d\mu(y).
\end{split}
\]

The first sum is bounded by the quantity above, so it is enough to estimate the second sum. In order to do this, let us fix $B_j$, $j\in J$ and let $\sigma:\{1,2,\ldots,l\}\to J$, $l\leq \# J$ an injective map such that $\sigma(1)=1$ and $\sigma (l)=j$, and the subsequent balls $B_{\sigma(i)}$ and $B_{\sigma(i+1)}$ have overlapping dilations. Since $l\leq \# J$, we obtain
\[
\begin{split}
|f_{B_j}-f_{B_1}|&\leq \left(\sum_{i=1}^{l-1}|f_{B_{\sigma(i+1)}}-f_{B_{\sigma(i)}}|\right)\\
&\leq\sum_{i=1}^{l-1}|f_{B_{\sigma(i+1)}}-f_{B_{\sigma(i+1)}\cup B_{\sigma(i)}}|\\
&\qquad\qquad+ \sum_{i=1}^{l-1}|f_{B_{\sigma(i+1)}\cup B_{\sigma(i)}}-f_{B_{\sigma(i)}}|.
\end{split}
\]
The two sums above can be bounded in the same way, so we will just work with the first one. For each term we have
\[
\begin{split}
|f_{B_{\sigma(i+1)}}-&f_{B_{\sigma(i+1)}\cup B_{\sigma(i)}}|^q\\
&= \frac{1}{\mu(B_{\sigma(i+1)})}\int_{B_{\sigma(i+1)}}|f_{B_{\sigma(i+1)}}-f+f-f_{B_{\sigma(i+1)}\cup B_{\sigma(i)}}|^qd\mu\\
&\lesssim \frac{1}{\mu(B_{\sigma(i+1)})}\int_{B_{\sigma(i+1)}}|f-f_{B_{\sigma(i+1)}}|^qd\mu\\
&\qquad \qquad +\frac{1}{\mu(B_{\sigma(i+1)}\cup B_{\sigma(i)})}\int_{B_{\sigma(i+1)}\cup B_{\sigma(i)}}|f-f_{B_{\sigma(i+1)}\cup B_{\sigma(i)}}|^qd\mu,
\end{split}
\]
where we have used the conditions on the union of two balls of the covering with overlapping dilations and the doubling condition.
In the last two integrals we can apply the first estimate above and then the uniform finiteness of $\#J$ allows us to obtain the desired result, that is, the starting point

\begin{equation}\label{startingfractional1}
\begin{aligned}
\dashint_B|f-f_B|d\mu \lesssim\frac{[w,v]_{A_{q,p}^{s-\gamma,r}(\Omega,\mu)}r(B^*)^\gamma}{w(B^*)^\frac{1}{q}}\left(\int_{B^*} |\nabla_{s,p,B}^\tau f|(y)^p  v(y) d\mu(y)\right)^{\frac{1}{p}}.
\end{aligned}
\end{equation}

Let us write in general $B^*=\lambda B$, for some $\lambda\geq1$ (which, in the nonfractional case will be the $\lambda$ in the Poincar\'e inequality and in the fractional case will be needed to be larger than $2$). Then in both cases we have that
\[\dashint_B|f-f_B|\leq C a_{p,s}(B^*)\]
whenever $B^*\subset \Omega$, where $a_{p,s}$ is the analogous to the one in \eqref{funcionales}, defined by the (fractional) derivatives  $|\nabla_{s,p,B}^\tau f|$ in \eqref{fractionalderSHT}.

Then, as $K\widehat{B_0^*}\subset \Omega$ (let us write the following again in the general setting of spaces of homogeneous type) implies $B^* \subset \Omega$ for any $B$ such that $B\subset \widehat{B_0}$, then   for any ball $B_0$ such that $K\widehat{B_0^*}\subset \Omega$, we get the weak inequality 
\[\|f-f_{B_0}\|_{L^{q,\infty}(B_0,w)}\leq Cr(B^*)^\gamma \left(\int_{\widehat{B_0^*}} |\nabla_{s,p,\widehat{B_0}}^\tau f|(y)^p v(y)d\mu(y)\right)^{1/p}.\]

The weak implies strong argument (which also works for the fractional derivative in the context of spaces of homogeneous type \cite{Dyda2016}) gives us, from this, the strong inequality 
\[\left(\int_{B_0}|f-f_{B_0}|^q wd\mu\right)^{\frac{1}{q}}\leq Cr(B^*)^\gamma \left(\int_{\widehat{B_0^*}} |\nabla_{s,p,\widehat{B_0}}^\tau f|(y)^pv(y)d\mu(y)\right)^{1/p},\]
for any ball $B_0$ satisfying $K\widehat{B_0^*}\subset \Omega$ (i.e. very small balls in $\Omega$).  

If we take the Whitney decomposition given in \cite{Grafakos2014} and perform an argument like the one in \cite[Theorem 3.8]{Diening2010}, then we obtain a chain decomposition of a John domain $\Omega$ in $X$ built by using balls $\{B_i\}_{i\in \mathbb{N}}$ in $\Omega$ satisfying, for some values $c,\sigma,N\geq1$, the following Whitney-type properties:
\begin{enumerate}
\item $c^{-1}r(B_i)\leq d(x)\leq cr(B_i)$ for any $x\in B_i$, $i\in \mathbb{N}$;
\item and $\sum_{i\in\mathbb{N}} \chi_{\sigma B_i}\leq N\chi_\Omega$.
\end{enumerate}
 We just have to choose $\delta<1$ (recall that $\widehat{B}=(1+\delta)KB$) and a Whitney decomposition with balls so small that $\sigma\geq 2\lambda K^2$ (the balls $W$ in the Whitney covering will satisfy the Whitney property and $CW\subset \Omega$ for $C>2\lambda K^2$ and the balls in the Boman chain will be of the form $B=C/\sigma W$, so the balls $\sigma B$ are in $\Omega$ and will satisfy the Whitney property). Observe that $N$ can be quite large, depending on the preceding parameters. With this choice, each ball in this chain decomposition satisfies the conditions above and thus, we can perform exactly the same argument as in the Euclidean case. We can then use the obvious version of Theorem \ref{Chua2} for spaces of homogeneous type, obtaining
\[\inf_{c\in\mathbb{R}}\left(\int_{\Omega}|f-c|^q w_\phi wd\mu\right)^{\frac{1}{q}}\leq C \left(\int_{\Omega} |\nabla_{s,p,\Omega}^\tau f|(y)^p(y)w_{\Phi,\gamma p}(y)(y)v(y)d\mu(y)\right)^{1/p}.\]  

\end{proof}

\begin{rem}
Hence, we have obtained the result of the previous section in the more general context of spaces of homogeneous type, although we are assuming here $w$ to be in $A_\infty(\mu)$. Despite the fact that we are forced to consider the assumption $w\in A_\infty$, note that we can now take $r=1$ in the $A_{q,p}^{\gamma,r}$ condition. 
\end{rem}
\begin{rem}
All these computations can be performed in weak John domains, and also they can probably be performed for more general functions of $d(x)$ than $\phi(t)=t^{\gamma p}$ at the right-hand side (just by defining a more general version of the class $A_{q,p}^{\gamma,r}$).
\end{rem}

\section{Acknowledgements}

 We want to thank professor Seng Kee  Chua for pointing out us his result and also for his nice conversations about the topic. 

The  author is supported by the Basque Government through the BERC 2018-2021 program and by Spanish Ministry of Science, Innovation and Universities through BCAM Severo Ochoa accreditation SEV-2017-0718. He is also supported by MINECO through the MTM2017-82160-C2-1-P project funded by (AEI/FEDER, UE), acronym ``HAQMEC'' and through ''la Caixa'' grant.


\printbibliography

\end{document}